\newtheorem{theorem}{Theorem}[section]
\newtheorem{lemma}{Lemma}[section]
\newtheorem{definition}{Definition}[section]
\newtheorem{remark}{Remark}[section]
\numberwithin{equation}{section} \setlength\arraycolsep{1pt}
\title{{\Large \bf Monotonicity formula and Liouville-type theorems of stable solution for the weighted elliptic system}\thanks{The work was partially supported by NSFC of China (No. 11201248), K.C. Wong Fund of Ningbo University and
Ningbo Natural Science Foundation (No. 2014A610027).}}
\author{{\small Liang-Gen Hu\footnote{email address: hulianggen@tom.com}}\\[0.05cm] {\small Department of Mathematics,
Ningbo University, 315211, P.R. China}}
\date{}
\begin{document}
\maketitle
\def\abstractname{}
\vspace*{-35pt}

\begin{abstract}
\noindent {\bf Abstract:} In this paper, we are concerned with the weighted elliptic system
\begin{equation*}
\begin{cases}
-\Delta u=|x|^{\beta} v^{\vartheta},\\
-\Delta v=|x|^{\alpha} |u|^{p-1}u,
\end{cases}\quad \mbox{in}\;\ \Omega,
\end{equation*}where $\Omega$ is a subset of $\mathbb{R}^N$, $N \ge 5$, $\alpha >-4$, $0 \le \beta \le \dfrac{N-4}{2}$, $p>1$ and $\vartheta=1$. We first apply Pohozaev identity to construct a monotonicity formula
and reveal their certain equivalence relation. By the use of
{\it Pohozaev identity}, {\it monotonicity formula} of solutions together with a {\it blowing down} sequence,
we prove Liouville-type theorems of stable solutions (whether positive or sign-changing) for the weighted elliptic system
in the higher dimension.
\\ [0.2cm]
{\bf Keywords:} Liouville-type theorem; stable solutions; Pohozaev identity; monotonicity formula; blowing down sequence
\end{abstract}\vskip .2in

\section{Introduction}

In this article, we examine the nonexistence of classical stable solutions of the weighted elliptic system given by
\begin{equation}\label{eq:1.1}
\begin{cases}
-\Delta u=|x|^{\beta} v^{\vartheta},\\
-\Delta v=|x|^{\alpha} |u|^{p-1}u,
\end{cases}\quad \mbox{in}\;\ \Omega,
\end{equation}where $\Omega$ is a subset of $\mathbb{R}^N$, $N\ge 5$, $\alpha>-4$, $0 \le \beta \le \dfrac{N-4}{2}$ and $p \theta>1$.\vskip .06in

The idea of using the Morse index of a solution for a semilinear elliptic equation was first explored by Bahri and Lions \cite{Bahri}
to get further qualitative properties of the solution. Recently, along this line of research, Dancer
\cite{Dancer-TAMS,Dancer-IUMJ,Dancer-JRAM} introduced the finite Morse index solution
and made the significant progress in the elliptic equations. Let us note that the solution $u$ is stable if and only if its Morse index is equal to zero.
In 2007, Farina considered the Lane-Emden equation
\begin{equation}\label{eq:1.2}
- \Delta u =|u|^{p-1}u,
\end{equation}on bounded and unbounded domains of $\Omega \subset \mathbb{R}^N$, with $N \ge 2$ and $p >1$. Based on a delicate application of
the classical Moser's iteration, he gave the complete
classification of finite Morse index solutions (positive or sign-changing) in his seminal paper \cite{Farina}. Hereafter, many experts utilized the Moser's iterative method to discuss the stable and finite Morse index solutions of the harmonic and fourth-order
elliptic equation and obtained many excellent results. We refer to \cite{Dancer-1,Wang-Ye,Wei-Ye,Wei-Xu-Yang}
and the reference therein.\vskip .06in

However, the classical Moser's iterative technique does not completely
classify finite Morse index solutions of the biharmonic equation
\begin{equation*}
\Delta ^2 u=|u|^{p-1}u,\quad \mbox{in} \;\ \Omega \subset \mathbb{R}^N.
\end{equation*}
To solve the problem, D\'{a}vila et al. \cite{Davila} have recently
derived a monotonicity formula of solutions and given the complete classification of stable and finite Morse index solutions for the biharmonic equation by the application of Pohozaev identity and the monotonicity formula. We note that many outstanding papers \cite{Davila-JFA,Davila,Pacard-1,Pacard-2,Wang} utilize a monotonicity formula to study the partial regularity of stationary weak solution, stable and finite Morse index solutions for the harmonic and fourth-order equation.\vskip .06in

On the other hand, some experts were interesting in the Lane-Emden system and obtained some excellent results \cite{Cowan-Nonlinearity,Esposito-PRSE,Fazly-ANS,Fazly-Ghoussoub}. In 2013, applying a iterative method and the pointwise estimate in \cite{Souplet}, Cowan proved the following result.\vskip .06in

\noindent {\bf Theorem A.} (\cite[Theorem 2]{Cowan-Nonlinearity}) {\it Suppose that $p>\theta=1$, $\alpha=\beta=0$ and
\begin{equation*}
N<2+\dfrac{4(p+1)}{p-1}\left (\sqrt{\dfrac{2p}{p+1}}+\sqrt{\dfrac{2p}{p+1}-\sqrt{\dfrac{2p}{p+1}}}\right ).
\end{equation*}Then there is no positive stable solution of (\ref{eq:1.1}).}\vskip .06in

Adopting the same method as Cowan \cite{Cowan-Nonlinearity}, Fazly obtained the following result.\vskip .06in

\noindent {\bf Theorem B.} (\cite[Theorem 2.4]{Fazly-ANS}) {\it Suppose that $(u,v)$ is $C^2(\mathbb{R}^N)$ nonnegative entire semi-stable solution of
\begin{equation*}
\begin{cases}
-\Delta u=\rho (1+|x|^2)^{\frac{\alpha}{2}}v,\\
-\Delta v=\varrho (1+|x|^2)^{\frac{\alpha}{2}}u^p,
\end{cases}
\end{equation*}
with $\rho, \varrho >0$ in the dimension
\begin{equation*}
N<8+3\alpha+\dfrac{8+4\alpha}{p-1}.
\end{equation*}Then, $(u,v)$ is the trivial solution.}\vskip .06in

We observe that the dimension $N<8+3\alpha+\dfrac{8+4\alpha}{p-1}$ in \cite[Theorem 2.4]{Fazly-ANS}
is already larger than the {\it critical hyperbola}, i.e., $N=4+\alpha+\dfrac{8+4\alpha}{p-1}$.
Recently, Fazly and Ghoussoub
\cite[Theorem 4]{Fazly-Ghoussoub} have considered the nonexistence of positive stable solutions for the weighted
elliptic system (\ref{eq:1.1}), which the dimension satisfies
\begin{equation*}
N<2+2\left (\dfrac{p(\beta+2)+\alpha+2}{p \theta-1} \right ) \left (\sqrt{\dfrac{p\theta (\theta+1)}{p+1}}+\sqrt{\dfrac{p \theta (\theta+1)}{p+1}-\sqrt{\dfrac{p\theta (\theta+1)}{p+1}}}\right ).
\end{equation*}Clearly, if $\theta=1$ and $\alpha=\beta=0$ in (\ref{eq:1.1}), then their result is the same as Theorem A.\vskip .1in

Let us briefly recall the fact that Liouvile-type theorem of solutions for various Lane-Emden equations and systems is interesting and challenging for decades.\vskip .06in

First, Pohozaev identity shows that the Lane-Emden equation with the Dirichlet boundary condition has no positive
solution on a bounded star-shaped domain $\Omega \subset \mathbb{R}^N$, whenever $p \ge \dfrac{N+2}{N-2}$.  On the other hand, Gidas and Spruck obtained the optimal Liouville-type theorems in the celebrated paper \cite{Gidas}, that is, the Lane-Emden equation (\ref{eq:1.2}) has no positive solution if and only if $1<p <\dfrac{N+2}{N-2}(=+\infty,$ if $N\le 2)$. In 1991, Bidaut-V\'{e}ron and V\'{e}ron \cite{Bidaut} obtained the asymptotic behavior of positive solution by utilizing the Bochner-Lichnerowicz-Weitzenb\"{o}ck formula in $\mathbb{R}^N$.\vskip .06in

In the case of the Lane-Emden systems (\ref{eq:1.1}) with $\alpha=\beta=0$, Pucci and Serrin \cite{Pucci} proved that if $\dfrac{N}{p+1}+\dfrac{N}{\theta+1} \le N-2$ and $\Omega$ is a bounded star-shaped domain of $\mathbb{R}^N$, then there is no positive solution of (\ref{eq:1.1}) with the Dirichlet boundary conditions. Noting that the curve $\dfrac{N}{p+1}+\dfrac{N}{\theta+1}=N-2$ is the {\it critical Sobolev hyperbola}. Similar to the Lane-Emden equation, the following conjecture is interesting and challenging.\vskip .06in

\noindent {\bf Conjecture} ({\it Lane-Emden Conjecture}) {\it Suppose $(p,\theta)$ is under the critical Sobolev hyperbola, i.e.,
\begin{equation*}
\dfrac{N}{p+1}+\dfrac{N}{\theta+1}>N-2.
\end{equation*}Then there is no positive solution for the elliptic system (\ref{eq:1.1}) with $\alpha=\beta=0$.}\vskip .06in

The case of radial solutions was solved by Mitidieri \cite{Mitidieri} in any dimension, and the positive radial solutions on and above the critical Sobolev hyperbola was constructed by \cite{Mitidieri,Serrin-Zou-98}, which is the optimal Liouville-type theorem for radial solutions. The {\it conjecture} (for non-radial solutions) seems difficult. In the dimension $N=3$, Serrin and Zou \cite{Serrin-Zou-96} proved the {\it conjecture} for the polynomially bounded solutions, which the boundedness was removed in \cite{Polacik}.
In 2009, Souplet \cite{Souplet} solved the {\it conjecture} in $N=4$ or a new
region for $N \ge 5$.
However, the weighted Lane-Emden system (\ref{eq:1.1}) is even less understood. For example,
the paper \cite{Phan} proved the {\it conjecture}
for the equation $-\Delta u=|x|^{\alpha} u^p$ in $N=3$; In 2012, Phan \cite{Phan-ADE}
solved the {\it conjecture} for the system (\ref{eq:1.1}) in two cases: {\it case 1}. $N=3$ and bounded solutions; {\it case 2}. $N=3$ or $4$ and $\alpha, \beta \le 0$.\vskip .1in

Here and in the following, we always assume that $N\ge 5$, $\alpha>-4$, $0\le \beta \le \dfrac{N-4}{2}$, $p>1$ and $\theta=1$.
Motivated by the ideas in \cite{Davila,Du,Hu}, we will construct a monotonicity formula of solutions in the dimension $4+\beta+\dfrac{8+2\alpha+2\beta}{p-1}<N<N_{\alpha,\beta}(p)$ ($N_{\alpha,\beta}(p)$ see below (\ref{eq:3.1})) and get various integral estimates, and then use these results to study Liouville-type theorems of stable solution for the weighted elliptic system (\ref{eq:1.1}).
\vskip .1in

\begin{theorem}\label{eq:t1.1}
For any $4+\beta+\dfrac{8+2\alpha+2\beta}{p-1}<N<N_{\alpha,\beta}(p)$, assume that $u\in
W^{2,2}_{loc}(\mathbb{R}^N\backslash \{0\})$ is a homogeneous, stable solution of (\ref{eq:1.1}), $|x|^{\alpha}|u|^{p+1} \in L_{loc}^1(\mathbb{R}^N
\backslash \{0\})$ and $|x|^{-\beta}|\Delta u|^2 \in L_{loc}^1(\mathbb{R}^N\backslash \{0\})$. Then
$u \equiv 0$.
\end{theorem}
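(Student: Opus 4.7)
The plan is to use the homogeneity of $u$ to reduce the problem to a quadratic inequality on the unit sphere, and then show that in the range $N < N_{\alpha,\beta}(p)$ this inequality is incompatible with any nontrivial angular profile.

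First I would observe that a homogeneous solution of (\ref{eq:1.1}) must carry the unique scaling degree $-s_1$ with $s_1 := \tfrac{4+\alpha+\beta}{p-1}$, so I can write $u(x) = r^{-s_1}\omega(\sigma)$ and $v(x) = r^{-s_2}\psi(\sigma)$ with $s_2 := s_1 + 2 + \beta$, where $r = |x|$ and $\sigma = x/|x|$. The system then reduces to
\begin{equation*}
-\Delta_\sigma \omega + H_1 \omega = \psi, \qquad -\Delta_\sigma \psi + H_2 \psi = |\omega|^{p-1}\omega \quad\text{on } S^{N-1},
\end{equation*}
where $H_i := s_i(N-2-s_i) > 0$ under the hypotheses on $N$. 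Integration by parts on annuli, carefully keeping the nonvanishing boundary terms produced by the homogeneity, yields the Pohozaev-type identity
\begin{equation*}
\int_{S^{N-1}}\psi^2\,d\sigma - \int_{S^{N-1}}|\omega|^{p+1}\,d\sigma = -(2+\beta)(N-4-\beta-2s_1)\int_{S^{N-1}}\omega\psi\,d\sigma,
\end{equation*}
in which $\int \omega\psi = \int |\nabla_\sigma\omega|^2 + H_1 \int \omega^2 \ge 0$, with strict inequality unless $\omega \equiv 0$.

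Next, eliminating $v = -|x|^{-\beta}\Delta u$, the stability hypothesis reads $p\int|x|^\alpha|u|^{p-1}\phi^2 \le \int |x|^{-\beta}(\Delta\phi)^2$ for all $\phi \in C^2_c(\mathbb{R}^N\setminus\{0\})$. Testing against $\phi = u\zeta$ for a log-scale cut-off $\zeta_R(r) = \eta((\log r)/R)$, expanding $(\Delta(u\zeta))^2$ via $-\Delta u = r^\beta v$, and using the Pohozaev identity to cancel the leading contribution, one is left with a quadratic form in $\zeta$ involving $\int r^{a+2}\zeta L\zeta\,dr$ and $\int r^{a+4}(L\zeta)^2\,dr$, where $a := N-5-\beta-2s_1 > -1$ and $L\zeta := \zeta'' + (N-1-2s_1)\zeta'/r$. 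Passing to $t = \log r$ and sending $R \to \infty$, the relevant ratios attain the sharp one-dimensional Rellich constants $(a+1)^2/4$ and $(a+1)^4/16$, producing a sphere-only inequality
\begin{equation*}
(p-1)\bigl(X + (H_1+H_2) Y + H_1 H_2\, E\bigr) \;\le\; c_Y(N,\alpha,\beta,p)\,Y + c_E(N,\alpha,\beta,p)\,E,
\end{equation*}
where $X = \int (\Delta_\sigma\omega)^2$, $Y = \int |\nabla_\sigma\omega|^2$, $E = \int \omega^2$, and $c_Y, c_E$ are explicit constants built from the data and the limiting Rellich constants.

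Finally, decomposing $\omega$ into spherical harmonics with $-\Delta_\sigma$-eigenvalues $\mu_k = k(k+N-2)$, the displayed inequality restricted to the $k$-th mode becomes $[(p-1)\mu_k^2 + \tilde c_Y\,\mu_k + \tilde c_E]\,\|\omega_k\|_{L^2}^2 \le 0$, a quadratic in $\mu_k$. The critical dimension $N_{\alpha,\beta}(p)$ is, by definition, the threshold below which this quadratic is strictly positive for every admissible $\mu_k \ge 0$, forcing every Fourier mode of $\omega$ to vanish and hence $u \equiv 0$. The principal obstacle is the cut-off step: under a naive choice $\zeta_R(r) = \eta(r/R)$ the correction terms from $\nabla u\cdot\nabla\zeta$ and $u\Delta\zeta$ inside $(\Delta(u\zeta))^2$ are of the same order $R^{a+1}$ as the main term, so one must work on the log scale and perform every integration by parts in $t$ to isolate the sharp constants $(a+1)^2/4$ and $(a+1)^4/16$; any slack widens $c_Y, c_E$ and shrinks the admissible range below the sharp threshold $N_{\alpha,\beta}(p)$.
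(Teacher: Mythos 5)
Your proposal follows the same overall strategy as the paper: reduce via homogeneity to the equation $\Delta_\theta^2\Psi-\Upsilon\Delta_\theta\Psi+\Gamma\Psi=|\Psi|^{p-1}\Psi$ on $\mathbb{S}^{N-1}$ (your sphere system with $H_1,H_2$ is the factored form, and your ``Pohozaev'' identity is equivalent to the paper's energy identity (3.4)), then test stability and combine. But the execution of the stability test differs in a way that matters, and your cut-off choice leaves a genuine gap.

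The paper tests stability with $\phi=r^{-(N-4-\beta)/2}\Psi(\theta)\xi_\varepsilon(r)$, \emph{not} $u\zeta$. The exponent $(N-4-\beta)/2$ is the scaling-balanced one for the weighted Rellich form: with it, the leading radial integral is $\int r^{-1}\xi_\varepsilon^2\sim 2|\ln\varepsilon|\to\infty$, while all cross terms involving $\xi_\varepsilon'$, $\xi_\varepsilon''$ stay bounded because of $r|\xi'|+r^2|\xi''|\le C$. Dividing by $\int r^{-1}\xi_\varepsilon^2$ makes the error terms disappear, and the Rellich constants $\tfrac{(N+\beta)(N-4-\beta)}{2}$, $\tfrac{(N+\beta)^2(N-4-\beta)^2}{16}$ drop out of the leading square $\bigl[-\tfrac{(N+\beta)(N-4-\beta)}{4}\Psi+\Delta_\theta\Psi\bigr]^2$ with no optimization needed. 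Your $\phi=u\zeta=r^{-s_1}\omega\zeta$ with $s_1=\tfrac{4+\alpha+\beta}{p-1}<\tfrac{N-4-\beta}{2}$ carries the \emph{wrong} power: the leading radial integral becomes $\int r^a\zeta^2$ with $a=N-5-\beta-2s_1>-1$, and now the cross terms $\int r^{a+2}\zeta L\zeta$, $\int r^{a+4}(L\zeta)^2$ are of the \emph{same exponential order}. The sharp constants are then not ``attained'' but must be extracted by genuinely minimizing the ratios $P=\int r^a\zeta L\zeta\cdot r^2/\int r^a\zeta^2$ and $Q=\int r^a(r^2L\zeta)^2/\int r^a\zeta^2$ over $\zeta$. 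A computation (set $t=\log r$, $\xi=e^{ct/2}\zeta$ with $c=a+1$) shows $\sup_\zeta P=H_1-\kappa$ and $\inf_\zeta Q=(\kappa-H_1)^2$, with $\kappa=\tfrac{(N+\beta)(N-4-\beta)}{4}$; these combine to recover exactly the paper's $2\kappa$ and $\kappa^2$, so your route \emph{can} reach the sharp inequality. However, the minimizers are $\zeta\approx r^{-c/2}\cdot(\text{slowly varying})$, i.e.\ they carry the power factor $r^{-c/2}=r^{s_1-(N-4-\beta)/2}$, which is precisely what turns $u\zeta$ back into the paper's test function. Your stated cut-off $\zeta_R(r)=\eta((\log r)/R)$ with a \emph{fixed} profile $\eta$ does not do this: for $c>0$ the ratio $\int e^{ct}\dot\eta_R^2/\int e^{ct}\eta_R^2$ converges to $\tfrac{kc^2}{2(2k-1)}>\tfrac{c^2}{4}$ where $k$ is the vanishing order of $\eta$ at the endpoint, so the constants you actually obtain are strictly worse than $(a+1)^2/4$, $(a+1)^4/16$ (which, incidentally, are also not the quantities that appear — the operator $L$ has a first-order term, and the relevant constants are $\kappa-H_1$ and $(\kappa-H_1)^2$, not $c^2/4$ and $c^4/16$). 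As you note yourself, any slack in these constants shrinks the admissible range below $N_{\alpha,\beta}(p)$; so as written, your argument does not reach the full range claimed in the theorem. The repair is exactly the paper's test function, or equivalently allowing $\zeta$ to carry the power $r^{-c/2}$.

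Two smaller remarks. First, once the sphere inequality $(p-1)X+[p\Upsilon-2\kappa]Y+[p\Gamma-\kappa^2]E\le0$ is obtained, there is no need to decompose into spherical harmonics: in the range $4+\beta+2\lambda<N<N_{\alpha,\beta}(p)$, the paper shows all three coefficients are positive (the middle one follows from the last by the arithmetic–geometric mean inequality, see (3.2)–(3.3)), so $X=Y=E=0$ directly. Second, the definition of $N_{\alpha,\beta}(p)$ in the paper is the root of the single condition $p\Gamma=\tfrac{(N+\beta)^2(N-4-\beta)^2}{16}$, not the threshold for a mode-by-mode quadratic; the two descriptions coincide only because the $Y$-coefficient stays positive throughout the range.
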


Applying Theorem \ref{eq:t1.1} and the properties of monotonicity formula (\ref{eq:2.14}), we get

\begin{theorem}\label{eq:t1.2}
If $u\in C^4(\mathbb{R}^N)$ is a stable solution of (\ref{eq:1.1}) in $\mathbb{R}^N$ and $5 \le N \le N_{\alpha,\beta}(p)$, then $u \equiv 0$.
\end{theorem}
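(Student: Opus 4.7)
The plan is to reduce Theorem \ref{eq:t1.2} to Theorem \ref{eq:t1.1} via a blowing-down sequence controlled by the monotonicity formula (\ref{eq:2.14}). Set
\begin{equation*}
a=\frac{4+\alpha+\beta}{p-1},\qquad b=a+2+\beta,
\end{equation*}
the natural scaling exponents for (\ref{eq:1.1}): $u^\lambda(x):=\lambda^a u(\lambda x)$, $v^\lambda(x):=\lambda^b v(\lambda x)$ is again a stable classical solution on $\mathbb{R}^N$. As a preliminary step, apply the stability inequality to appropriate cut-off test functions to obtain scale-invariant integral estimates of the form
\begin{equation*}
\int_{B_R}|x|^\alpha|u|^{p+1}\,dx + \int_{B_R}|x|^{-\beta}|\Delta u|^2\,dx \le C\,R^{N-\frac{(p+1)(4+\alpha+\beta)}{p-1}}
\end{equation*}
for every $R>0$; the hypothesis $N\le N_{\alpha,\beta}(p)$ is precisely what makes the exponent of $R$ the correct sign so that $\{u^\lambda\},\{v^\lambda\}$ are uniformly bounded in the weighted spaces appearing in Theorem \ref{eq:t1.1}.

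Combining these uniform bounds with interior elliptic regularity on annuli $B_{2r}\setminus \overline{B_r}$, extract a subsequence $\lambda_k\to\infty$ along which $(u^{\lambda_k},v^{\lambda_k})\to(u^\infty,v^\infty)$ in $W^{2,2}_{loc}(\mathbb{R}^N\setminus\{0\})$ and in $C^4_{loc}(\mathbb{R}^N\setminus\{0\})$. The limit is then a stable solution of (\ref{eq:1.1}) in the regularity class demanded by Theorem \ref{eq:t1.1}. The key step is to show $u^\infty$ is homogeneous of degree $-a$: direct computation shows the scaling relation $E(r;u^\lambda,v^\lambda)=E(\lambda r;u,v)$, and monotonicity of $E(\cdot;u,v)$ together with the boundedness afforded by the integral estimates above make $E(\infty;u,v)$ a finite limit. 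Hence $E(\rho;u^\infty,v^\infty)\equiv E(\infty;u,v)$ is constant in $\rho>0$, and the structure of (\ref{eq:2.14}) --- whose $\rho$-derivative is a non-negative ``defect'' term that vanishes exactly on functions invariant under the natural rescaling --- forces $u^\infty$ and $v^\infty$ to be homogeneous of degrees $-a$ and $-b$ respectively. Theorem \ref{eq:t1.1} now applies and gives $u^\infty\equiv 0$, so that $E(\infty;u,v)=0$.

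Since $u\in C^4(\mathbb{R}^N)$ and $a>0$, a short Taylor-expansion argument shows $u^\lambda\to 0$ locally uniformly as $\lambda\to 0^+$, so $E(0^+;u,v)=0$. Monotonicity then pins $E(r;u,v)\equiv 0$ on $(0,\infty)$, and applying the same rigidity step to $(u,v)$ itself shows that $u$ is homogeneous of degree $-a<0$ on $\mathbb{R}^N$. Combined with $u\in C^4(\mathbb{R}^N)$, this forces $u\equiv 0$.

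The hard part will be the convergence and limit-identification step: deriving the stability estimate with the correct $R$-dependence in the doubly-weighted setting (with $|x|^\alpha$ and $|x|^{-\beta}$ playing asymmetric roles) and with possibly sign-changing $u$, and then passing the monotonicity formula to the limit strongly enough to justify the rigidity conclusion that $u^\infty$ is homogeneous. The upper bound $N\le N_{\alpha,\beta}(p)$ enters precisely here --- as in the biharmonic setting of D\'avila et al.\ \cite{Davila}, it is the threshold beyond which the scaling ceases to contract the relevant energies, so that the blow-down analysis no longer yields a nontrivial limiting profile.
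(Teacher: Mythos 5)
Your outline reproduces what the paper does in the supercritical range (its Case III), but it does not prove the full statement and it conflates the roles of two different numerical thresholds, so let me flag the concrete gaps.

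First, the monotonicity formula of Theorem \ref{eq:t2.1} requires $p \ge \dfrac{N+4+2\alpha+\beta}{N-4-\beta}$, which is exactly the condition $N \ge 4+\beta+\dfrac{8+2\alpha+2\beta}{p-1}$; below that threshold the constant $C(N,p,\alpha,\beta)$ in the derivative bound need not be nonnegative and the whole monotonicity/rigidity machinery collapses, and Theorem \ref{eq:t1.1} is likewise stated only for $N$ above that threshold. So your blow-down argument simply does not run for $5 \le N \le 4+\beta+\dfrac{8+2\alpha+2\beta}{p-1}$. The paper disposes of that range separately: when the exponent $N-4-\beta-\dfrac{8+2\alpha+2\beta}{p-1}$ in the estimate (\ref{eq:2.4}) is strictly negative the integral forces $u\equiv 0$ by letting $R\to\infty$, and in the borderline case it uses (\ref{eq:2.3}) plus H\"older on annuli. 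You need a statement to that effect; your proposal only addresses one of the three cases. Relatedly, the scaled energy bound on $u^\kappa$ is a consequence of (\ref{eq:2.4}) alone (the rescaling cancels the $R$-dependence exactly), not of $N\le N_{\alpha,\beta}(p)$; the upper bound on $N$ enters only through Theorem \ref{eq:t1.1}, not through the uniform boundedness of the blow-down sequence.

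Second, the step from ``$u^\infty\equiv 0$'' to ``$\mathcal{M}(\infty;0,u)=0$'' is where the real work is, and your outline treats it as immediate. By scaling invariance $\mathcal{M}(r;0,u)=\mathcal{M}(1;0,u^r)$, but $\mathcal{M}$ contains boundary integrals of $u$, $\nabla u$, and derivatives thereof on spheres, and weak $W^{2,2}_{loc}$ convergence of $u^\kappa$ does not give convergence of those traces. The paper's Lemma \ref{eq:l4.3} closes this gap by first showing $\int_{B_1}|\Delta u^\kappa|^2+|x|^\alpha|u^\kappa|^{p+1}\to 0$ via (\ref{eq:2.3}), then upgrading to interior $L^p$-estimates to control $\sum_{j\le 2}|\nabla^j u^\kappa|$ on $B_2$, and finally applying a Fubini-type slicing argument to produce a radius $\gamma\in(1,2)$ on which $\|u^{\kappa}\|_{W^{2,2}(\partial B_\gamma)}\to 0$; only then does $\mathcal{M}(\gamma;0,u^{\kappa_i})\to 0$ follow, and monotonicity extends this to the full limit. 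Your claim of $C^4_{loc}(\mathbb{R}^N\setminus\{0\})$ convergence for the blow-down, which would make this easier, is neither established by the estimates you cite nor needed in the paper. Finally, the displayed exponent $N-\frac{(p+1)(4+\alpha+\beta)}{p-1}$ in your integral estimate is not the one that (\ref{eq:2.4}) produces; the correct exponent is $N-4-\beta-\frac{8+2\alpha+2\beta}{p-1}$, and they differ by $\alpha$, which matters precisely when you need to decide the sign in the subcritical regime you omitted.
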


\begin{remark}
\begin{itemize}
\item [\rm (1)] We apply Pohozaev identity to construct a monotonicity formula. From the process of the proof in Theorem \ref{eq:t2.1}, we can
observe that Pohozaev identity is equivalence to the certain derivative-type of the monotonicity formula.
\item [\rm (2)] Let us note that for the dimensions $4+\beta+\dfrac{8+2\alpha+2\beta}{p-1}<N<N_{\alpha,\beta}(p)$, we adopt a new
method of monotonicity formula together with blowing down sequence to investigate Liouville-type theorem. In addition,
a difficulty stems from the fact that the terms $|x|^{\alpha}$ and $|x|^{\beta}$ in (\ref{eq:1.1}) leads to the singularity. For this reason, we use a more delicate approach to derive
improved integral estimates.
\item[\rm (3)]  From the computation of $N_{\alpha,\beta}(p)$ (in Section 3), we find the following relation:
\begin{equation*}
\begin{cases}
N_{0,0}(p)>2+\dfrac{4(p+1)}{p-1}\left (\sqrt{\dfrac{2p}{p+1}}+\sqrt{\dfrac{2p}{p+1}-\sqrt{\dfrac{2p}{p+1}}}\right ),& \mbox{if}\;\ \alpha =\beta =0,\\
N_{\alpha,\alpha}(p)>8+3\alpha+\dfrac{8+4\alpha}{p-1},& \mbox{if}\;\  \alpha=\beta.
\end{cases}
\end{equation*}
Therefore, in contrast with {\bf Theorem A} and {\bf Theorem B}, we obtain Liouville-type theorem in the higher dimension.
\end{itemize}
\end{remark}

Next, we list some definitions and notations. Let $\Omega$ be a subset of $\mathbb{R}^N$ and $f,g \in C^1\left (\mathbb{R}^{N+2},\Omega \right )$. Following Montenegro \cite{Montenegro}, we consider the general elliptic system
\begin{equation*}
(S_{f,g})\;
\begin{cases}
-\Delta u=f(u,v,x),\\
-\Delta v=g(u,v,x),
\end{cases}\quad x \in \Omega.
\end{equation*}A solution $(u,v)\in C^2(\Omega)\times C^2(\Omega)$ of $(S_{f,g})$ is called {\it stable}, if the eigenvalue problem
\begin{equation*}
(E_{f,g})
\begin{cases}
-\Delta \phi=f_u(u,v,x)\phi+f_v(u,v,x)\psi+\eta \phi,\\
-\Delta \psi =g_u(u,v,x)\phi+g_v(u,v,x)\psi+\eta \psi,
\end{cases}
\end{equation*} has a first positive eigenvalue $\eta>0$, with corresponding positive smooth eigenvalue pair $(\phi,\psi)$.
A solution $(u,v)$ is said to be {\it semi-stable}, if the first eigenvalue $\eta$ is nonnegative.\vskip .06in

Inspired by the above definition, we give the integration-type definition of stability.

\begin{definition}
We recall that a critical point $u \in C^4(\Omega)$ of the energy function
\begin{equation*}
\mathcal{E}(u)=\int_{\Omega} \left [\dfrac{1}{2}\dfrac{|\Delta u|^2}{|x|^{\beta}} -\dfrac{1}{p+1} |x|^{\alpha} |u|^{p+1} \right ]dx
\end{equation*}is said to be a stable solution of (\ref{eq:1.1}), if, for any $\zeta \in C_0^2 (\Omega)$, we have
\begin{equation*}
p\int_{\Omega} |x|^{\alpha}|u|^{p-1}\zeta^2 dx \le \int_{\Omega} \dfrac{|\Delta \zeta|^2}{|x|^{\beta}} dx.
\end{equation*}
\end{definition}

The definition is interesting and well-defined. In deed, if $(u,v)$ is a semi-stable solution, then there exist $\eta \ge 0$ and a positive smooth eigenvalue pair $(\phi,\psi)$ such that
\begin{equation*}
\begin{cases}
-\Delta \phi = |x|^{\beta} \psi+\eta \phi,\\
-\Delta \psi = p|x|^{\alpha}|u|^{p-1}\phi+\eta \psi.
\end{cases}
\end{equation*}Multiply the second equation by $\dfrac{\zeta^2}{\phi}$ with $\zeta \in C_0^2(\Omega)$ to get
\begin{align}\label{eq:1.3}
& p\int_{\Omega} |x|^{\alpha}|u|^{p-1}\zeta^2 dx \le \int_{\Omega} -\Delta \psi \dfrac{\zeta^2}{\phi} dx
=\int_{\Omega}-\psi\Delta \left (\dfrac{\zeta^2}{\phi} \right ) dx \nonumber \\[0.1cm]
& = \int_{\Omega}\dfrac{1}{|x|^{\beta}}\left [1-\dfrac{\eta \phi}{|x|^{\beta}\psi+\eta \phi} \right ] \Delta \phi\Delta
\left (\dfrac{\zeta^2}{\phi}\right )dx.
\end{align}A simple calculation leads to
\begin{equation*}
\Delta \left (\frac{\zeta^2}{\phi}\right )=2\phi^{-1}|\nabla \zeta|^2+2\zeta\phi^{-1}\Delta \zeta-4\zeta\phi^{-2}\nabla \zeta\cdot \nabla \phi
+2\zeta^2\phi^{-3} |\nabla \phi|^2-\zeta^2\phi^{-2}\Delta \phi.
\end{equation*}Then we find
\begin{align*}
\Delta \phi \Delta \left (\dfrac{\zeta^2}{\phi} \right ) & -|\Delta \zeta|^2=2\zeta \phi^{-1}\Delta \zeta \Delta \phi
-\zeta^2\phi^{-2}|\Delta \phi|^2 -|\Delta \zeta|^2 \\
& +2 \phi^{-1}\Delta \phi [ |\nabla \zeta|^2-2\zeta\phi^{-1} \nabla \zeta \cdot \nabla \phi +\zeta^2 \phi^{-2} |\nabla \phi|^2 ] \\
= &- \left [(\zeta \phi^{-1}\Delta \phi -\Delta \zeta)^2 +2(\phi^{-1} |x|^{\beta}\psi+\eta) (\nabla \zeta- \zeta \phi^{-1} \nabla \phi)^2\right ] \\
\le & 0,
\end{align*}implies
\begin{equation*}
\int_{\Omega} \dfrac{\Delta \phi}{|x|^{\beta}} \Delta \left (\dfrac{\zeta^2}{\phi} \right ) dx
\le \int_{\Omega} \dfrac{|\Delta \zeta|^2}{|x|^{\beta}} dx.
\end{equation*}Therefore, combining the above inequality with (\ref{eq:1.3}), we obtain
\begin{equation*}
p\int_{\Omega} |x|^{\alpha} |u|^{p-1}\zeta^2 dx \le \int_{\Omega} \dfrac{|\Delta \zeta|^2}{|x|^{\beta}} dx.
\end{equation*}

\begin{remark}
Since $\phi$ is a smooth function, $\zeta \in C_0^2(\Omega)$ and $\beta \le \dfrac{N-4}{2}$, then the integration
$\displaystyle \int_{\Omega} \dfrac{1}{|x|^{\beta}}dx $ is well defined.
\end{remark}\vskip .06in

\noindent {\bf Notations.} Throughout this paper, $B_r(x)$ denotes the open ball of radius $r$ centered at $x$.
If $x=0$, we simply denote $B_r(0)$ by $B_r$. $C$ denotes various irrelevant
positive constants.\vskip .06in

The rest of the paper is organized as follows. In Section 2, we derive various
integral estimates and construct a monotonicity formula. In Section 3, we prove Liouville-type theorem of homogeneous, stable solutions in the dimensions $4+\beta+\dfrac{8+2\alpha+2\beta}{p-1}<N< N_{\alpha,\beta}(p)$. Finally, we study the qualitative properties of the monotonicity function
$\mathcal{M}$, and prove Theorem \ref{eq:t1.2} which is based on {\it Pohozaev-type identity}, {\it monotonicity formula} together with {\it blowing down} sequences in Section 4.
\vskip .2in

\section{Some estimates and a monotonicity formula}

\vskip .1in

\begin{lemma}(\cite[Lemma 2.2]{Wei-Ye})\label{eq:l2.1}
For any $\zeta \in C^4(\mathbb{R}^N)$ and $\eta \in C^4(\mathbb{R}^N)$, the identity holds
\begin{equation*}
\Delta \zeta \Delta \left (\zeta\eta^2 \right )=[\Delta (\zeta \eta)]^2-4 (\nabla \zeta \cdot \nabla \eta)^2-\zeta^2 |\Delta \eta|^2
+2\zeta \Delta \zeta |\nabla \eta|^2-4\zeta \Delta \eta \nabla \zeta \cdot \nabla \eta.
\end{equation*}
\end{lemma}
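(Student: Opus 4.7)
The statement is a pointwise algebraic identity, so my plan is to verify it by direct expansion using the product rule for the Laplacian, namely $\Delta(fg) = f\Delta g + 2\nabla f \cdot \nabla g + g\Delta f$. Rather than a clever trick, the work is really just careful bookkeeping on both sides.

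First I would compute the left-hand side. Taking $f=\zeta$ and $g=\eta^2$, the product rule gives
\begin{equation*}
\Delta(\zeta\eta^2) = \zeta\Delta(\eta^2) + 2\nabla\zeta\cdot\nabla(\eta^2) + \eta^2\Delta\zeta,
\end{equation*}
and since $\Delta(\eta^2)=2|\nabla\eta|^2+2\eta\Delta\eta$ and $\nabla(\eta^2)=2\eta\nabla\eta$, this expands to
\begin{equation*}
\Delta(\zeta\eta^2) = 2\zeta|\nabla\eta|^2 + 2\zeta\eta\Delta\eta + 4\eta\,\nabla\zeta\cdot\nabla\eta + \eta^2\Delta\zeta.
\end{equation*}
Multiplying by $\Delta\zeta$ then yields four terms.

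Next I would expand the right-hand side. Applying the product rule again with $f=\zeta$, $g=\eta$ gives $\Delta(\zeta\eta) = \zeta\Delta\eta + 2\nabla\zeta\cdot\nabla\eta + \eta\Delta\zeta$, and squaring this produces six terms (three squares and three cross terms). Then I would subtract $4(\nabla\zeta\cdot\nabla\eta)^2$, subtract $\zeta^2|\Delta\eta|^2$, add $2\zeta\Delta\zeta|\nabla\eta|^2$, and subtract $4\zeta\Delta\eta\,\nabla\zeta\cdot\nabla\eta$. The two ``pure square'' cross terms cancel against the correction terms, and one of the mixed cross terms $4\zeta\Delta\eta\,\nabla\zeta\cdot\nabla\eta$ cancels against the last correction, leaving exactly
\begin{equation*}
\eta^2(\Delta\zeta)^2 + 2\zeta\eta\Delta\zeta\Delta\eta + 4\eta\Delta\zeta\,\nabla\zeta\cdot\nabla\eta + 2\zeta\Delta\zeta|\nabla\eta|^2,
\end{equation*}
which matches the four terms produced on the left.

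There is no genuine obstacle here; the main risk is a sign or coefficient slip among the mixed terms of the form $\eta\Delta\zeta\,\nabla\zeta\cdot\nabla\eta$ versus $\zeta\Delta\eta\,\nabla\zeta\cdot\nabla\eta$, since they look similar but play different roles (one appears in the LHS expansion, the other is inserted by hand on the RHS precisely to kill a leftover cross term from $[\Delta(\zeta\eta)]^2$). Grouping the expansion of $[\Delta(\zeta\eta)]^2$ as $(\zeta\Delta\eta)^2 + (2\nabla\zeta\cdot\nabla\eta)^2 + (\eta\Delta\zeta)^2 + 2\cdot(\text{three cross products})$ makes the cancellations transparent, and once the terms are aligned the identity drops out. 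The formula can be viewed as the natural ``integration-by-parts-free'' Leibniz identity that will later let us absorb cut-off functions $\eta$ into a bilinear form $\Delta\zeta\cdot\Delta(\zeta\eta^2)$ while controlling the error terms in the stability inequality.
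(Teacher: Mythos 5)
Your computation is correct and is the natural direct verification: expanding $\Delta(\zeta\eta^2)$ and $\Delta(\zeta\eta)$ by the Leibniz rule, squaring, and matching terms. Note that the paper itself does not prove this lemma at all — it simply cites it as \cite[Lemma 2.2]{Wei-Ye} — so there is no "paper proof" to compare against; your direct pointwise expansion is exactly the kind of bookkeeping argument one would expect, and the cancellations you describe (the $a^2$, $b^2$, and $2ab$ cross terms from $[\Delta(\zeta\eta)]^2 = (\zeta\Delta\eta + 2\nabla\zeta\cdot\nabla\eta + \eta\Delta\zeta)^2$ being killed by the three subtracted correction terms) all check out, leaving precisely the four terms $\eta^2(\Delta\zeta)^2 + 2\zeta\eta\Delta\zeta\Delta\eta + 4\eta\Delta\zeta\,\nabla\zeta\cdot\nabla\eta + 2\zeta\Delta\zeta|\nabla\eta|^2$ coming from the left-hand side.
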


\begin{lemma}\label{eq:l2.2}
For any $\zeta \in C^4(\mathbb{R}^N)$ and $\eta \in C_0^4 (\mathbb{R}^N)$, then the following equalities hold
\begin{align}\label{eq:2.1}
\int_{\mathbb{R}^N} \Delta \left (\dfrac{\Delta \zeta}{|x|^{\beta}} \right ) & \zeta \eta^2 dx = \int_{\mathbb{R}^N} \dfrac{[\Delta(\zeta \eta)]^2}
{|x|^{\beta}}+\int_{\mathbb{R}^N}\dfrac{1}{|x|^{\beta}}\Big [-4(\nabla \zeta \cdot \nabla \eta)^2+2\zeta\Delta \zeta |\nabla \eta|^2 \Big ] dx \nonumber\\[0.2cm]
& +\int_{\mathbb{R}^N} \dfrac{\zeta^2}{|x|^{\beta}}\Big [ 2\nabla (\Delta \eta) \cdot \nabla \eta+|\Delta \eta|^2
-2\beta|x|^{-2}\Delta \eta(x\cdot \nabla \eta) \Big ] dx,
\end{align}and
\begin{align}\label{eq:2.2}
2\int_{\mathbb{R}^N} & \dfrac{|\nabla \zeta|^2|\nabla \eta|^2}{|x|^{\beta}} dx=\int_{\mathbb{R}^N}\left [ \dfrac{2}{|x|^{\beta}}
\zeta (-\Delta \zeta)|\nabla \eta|^2+\dfrac{\zeta^2}{|x|^{\beta}}\Delta
\left (|\nabla \eta|^2\right ) \right ]dx \nonumber \\[0.2cm]
&+\int_{\mathbb{R}^N} \dfrac{\zeta^2}{|x|^{\beta+2}}\Big [\beta(\beta+2-N) |\nabla \eta|^2-2\beta \left (x \cdot \nabla \left (|\nabla \eta|^2 \right )\right ) \Big ]dx.
\end{align}
\end{lemma}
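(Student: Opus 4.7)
\textbf{Proof plan for Lemma~\ref{eq:l2.2}.} Both identities are obtained by repeated integration by parts, with careful bookkeeping of the terms that arise from differentiating the singular weight $|x|^{-\beta}$. Since $\eta\in C_0^4(\mathbb{R}^N)$, all boundary terms at infinity vanish, while the hypothesis $\beta\le(N-4)/2$ (in particular $\beta<N$) ensures local integrability of $|x|^{-\beta}$; any boundary terms on small spheres $\{|x|=\varepsilon\}$ that arise from a standard cut-off procedure tend to $0$ as $\varepsilon\to 0$. I will suppress this approximation argument in what follows.

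For \eqref{eq:2.1}, the plan is to first move the outer Laplacian off $\Delta\zeta/|x|^{\beta}$ by integrating by parts twice:
\begin{equation*}
\int_{\mathbb{R}^N}\Delta\!\left(\dfrac{\Delta\zeta}{|x|^{\beta}}\right)\zeta\eta^{2}\,dx
=\int_{\mathbb{R}^N}\dfrac{\Delta\zeta}{|x|^{\beta}}\,\Delta(\zeta\eta^{2})\,dx.
\end{equation*}
Then I would invoke Lemma~\ref{eq:l2.1} pointwise to replace $\Delta\zeta\,\Delta(\zeta\eta^{2})$ by
$[\Delta(\zeta\eta)]^{2}-4(\nabla\zeta\cdot\nabla\eta)^{2}-\zeta^{2}|\Delta\eta|^{2}+2\zeta\Delta\zeta|\nabla\eta|^{2}-4\zeta\Delta\eta\,\nabla\zeta\cdot\nabla\eta.$
Comparison with the target right-hand side shows that what remains is to prove the auxiliary identity
\begin{equation*}
2\!\int\!\dfrac{\zeta^{2}|\Delta\eta|^{2}}{|x|^{\beta}}dx
+4\!\int\!\dfrac{\zeta\Delta\eta\,\nabla\zeta\!\cdot\!\nabla\eta}{|x|^{\beta}}dx
+2\!\int\!\dfrac{\zeta^{2}\nabla(\Delta\eta)\!\cdot\!\nabla\eta}{|x|^{\beta}}dx
-2\beta\!\int\!\dfrac{\zeta^{2}\Delta\eta\,(x\!\cdot\!\nabla\eta)}{|x|^{\beta+2}}dx=0.
\end{equation*}
This is established by writing $|\Delta\eta|^{2}=\Delta\eta\cdot\nabla\!\cdot\nabla\eta$ and integrating by parts the $\nabla\cdot$, using
$\nabla(|x|^{-\beta})=-\beta\,x/|x|^{\beta+2}$; the four resulting terms are precisely the ones above.

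For \eqref{eq:2.2}, the starting point is the pointwise identity $|\nabla\zeta|^{2}=\nabla\!\cdot(\zeta\nabla\zeta)-\zeta\Delta\zeta.$ Multiplying by $|\nabla\eta|^{2}/|x|^{\beta}$ and integrating, one integration by parts yields
\begin{equation*}
\int\dfrac{|\nabla\zeta|^{2}|\nabla\eta|^{2}}{|x|^{\beta}}\,dx
=-\int\dfrac{\zeta\,\nabla\zeta\!\cdot\!\nabla(|\nabla\eta|^{2})}{|x|^{\beta}}\,dx
+\beta\!\int\dfrac{\zeta\,\nabla\zeta\!\cdot x\,|\nabla\eta|^{2}}{|x|^{\beta+2}}\,dx
-\int\dfrac{\zeta\Delta\zeta|\nabla\eta|^{2}}{|x|^{\beta}}\,dx.
\end{equation*}
I would then integrate by parts once more in each of the first two terms, using $\nabla(\zeta^{2})=2\zeta\nabla\zeta$; this moves one derivative off $\zeta$ and produces $\zeta^{2}\Delta(|\nabla\eta|^{2})$ and $\zeta^{2}\,x\!\cdot\!\nabla(|\nabla\eta|^{2})$, together with weight-derivative contributions. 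A short divergence computation gives
$\nabla\!\cdot\bigl(x\,|\nabla\eta|^{2}/|x|^{\beta+2}\bigr)=(N-\beta-2)|\nabla\eta|^{2}/|x|^{\beta+2}+x\!\cdot\!\nabla(|\nabla\eta|^{2})/|x|^{\beta+2}$, which produces the constant $\beta(\beta+2-N)$ in the final formula. Multiplying the resulting identity by $2$ and collecting terms yields \eqref{eq:2.2}.

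The only real difficulty is the careful signed accounting of the weight-derivative terms through each integration by parts (every time $|x|^{-\beta}$ is differentiated it contributes $-\beta x/|x|^{\beta+2}$), and verifying that the small-sphere boundary terms from the cut-off argument indeed vanish; the latter follows from $\beta\le(N-4)/2$, since $\eta$ and its derivatives are uniformly bounded and the surface measure of $\{|x|=\varepsilon\}$ is $O(\varepsilon^{N-1})$, which dominates any $\varepsilon^{-\beta}$ factor appearing after differentiating the weight a bounded number of times.
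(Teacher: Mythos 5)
Your proposal is correct and follows essentially the same route as the paper: for \eqref{eq:2.1} it passes through Lemma~\ref{eq:l2.1} and then closes the gap by a single weighted integration by parts (the paper integrates the $-4\zeta\Delta\eta\,\nabla\zeta\cdot\nabla\eta$ term by moving $\nabla\zeta^2$, while you start from $2\zeta^2|\Delta\eta|^2$ and move the divergence off $\nabla\eta$ --- two equivalent ways to derive the same four-term auxiliary identity), and for \eqref{eq:2.2} both arguments start from $|\nabla\zeta|^2=\tfrac12\Delta(\zeta^2)-\zeta\Delta\zeta$ and then transfer derivatives onto the weight, producing the same $\beta(\beta+2-N)$ coefficient. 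Your explicit attention to the small-sphere cut-off and the observation that $\beta\le(N-4)/2$ guarantees the boundary terms vanish is a welcome bit of rigor that the paper leaves implicit.
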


\begin{proof}By the divergence theorem and integration by parts, we get
\begin{align*}
-4\int_{\mathbb{R}^N} & \dfrac{1}{|x|^{\beta}} \zeta \Delta \eta \nabla \zeta \cdot \nabla \eta dx =-2
\int_{\mathbb{R}^N} \dfrac{1}{|x|^{\beta}} \Delta \eta \nabla \zeta^2 \cdot \nabla \eta dx\\[0.18cm]
& = 2\int_{\mathbb{R}^N} \dfrac{\zeta^2}{|x|^{\beta}} \Big [\nabla (\Delta \eta)\cdot \nabla \eta +|\Delta \eta|^2 -\beta |x|^{-2}
\Delta \eta (x \cdot \nabla \eta)\Big ] dx.
\end{align*}Combining with Lemma \ref{eq:l2.1}, it implies that the identity (\ref{eq:2.1}) holds.

On the other hand, it is easy to see that
\begin{equation*}
\dfrac{1}{2}\Delta (\zeta^2)=\zeta \Delta \zeta +|\nabla \zeta|^2,
\end{equation*}then we obtain
\begin{equation*}
\int_{\mathbb{R}^N}\dfrac{|\nabla \zeta|^2|\nabla \eta|^2}{|x|^{\beta}}= \int_{\mathbb{R}^N} \dfrac{\zeta (-\Delta \zeta)|\nabla \eta|^2}{|x|^{\beta}}+\dfrac{1}{2}\int_{\mathbb{R}^N}
\zeta^2 \Delta \left ( \dfrac{ |\nabla \eta|^2}{|x|^{\beta}} \right )dx.
\end{equation*}A direct computation yields
\begin{equation*}
\Delta \left ( \dfrac{ |\nabla \eta|^2}{|x|^{\beta}} \right ) =
\dfrac{1}{|x|^{\beta}} \Big [\beta (\beta+2-N)|x|^{-2}|\nabla \eta|^2-2\beta |x|^{-2}(x\cdot \nabla (|\nabla \eta|^2))+\Delta (|\nabla \eta|^2) \Big ].
\end{equation*}Substituting into the above identity, we get the identity (\ref{eq:2.2}).
\end{proof}

\begin{lemma}\label{eq:al2.3}
Let $u \in C^4(\mathbb{R}^N)$ be a stable solution of (\ref{eq:1.1}). Then we find
\begin{align}\label{eq:2.3}
& \int_{B_R(x)}\left (\dfrac{|\Delta u|^2}{|z|^{\beta}} +|z|^{\alpha}|u|^{p+1} \right )dz \nonumber \\[0.1cm]
& \le CR^{-2} \int_{B_{2R}(x)\backslash B_R(x)}\dfrac{|u\Delta u|}{|z|^{\beta}} dz +CR^{-4}\int_{B_{2R}(x)\backslash B_R(x)}\dfrac{u^2}{|z|^{\beta}} dz.
\end{align}Furthermore, for large enough $m$, we obtain that for any $\psi \in C_0^4(\mathbb{R}^N)$ with $0\le \psi \le 1$
\begin{align*}
 \int_{\mathbb{R}^N}\left [\dfrac{|\Delta u|^2}{|x|^{\beta}}+|x|^{\alpha}|u|^{p+1}\right ] & \psi^{2m} dx
\le C \int_{\mathbb{R}^N}|x|^{-\frac{2\alpha+\beta p+\beta}{p-1}}\mathfrak{Q}(\psi^m)^{\frac{p+1}{p-1}}dx  \\[0.15cm]
& +C \int_{\mathbb{R}^N}|x|^{-\frac{2\alpha+(\beta+2)(p+1)}{p-1}}\mathfrak{R}(\psi^m)^{\frac{p+1}{p-1}}dx,
\end{align*}and
\begin{equation}\label{eq:2.4}
\int_{B_R(x)}\left [\dfrac{|\Delta u|^2}{|z|^{\beta}}+|z|^{\alpha}|u|^{p+1}\right ]\psi^{2m} dz
\le CR^{N-4-\beta-\frac{8+2\alpha+2\beta}{p-1}}.
\end{equation}Here
\begin{eqnarray*}
& \mathfrak{Q}(\psi^m)=|\nabla \psi|^4+\psi^{2(2-m)}\Big [|\nabla (\Delta \psi^m)\cdot \nabla \psi^m |+|\Delta \psi^m|^2+
\left |\Delta |\nabla \psi^m|^2 \right | \Big ],& \\[0.1cm]
& \mathfrak{R}(\psi^m)=\psi^{2(2-m)}
\Big [\left |\Delta \psi^m \right |\left |x \cdot \nabla \psi^m \right |+
\left |\nabla \psi^m \right |^2+\left |x\cdot \nabla
(|\nabla \psi^m|^2)\right | \Big ].&
\end{eqnarray*}
\end{lemma}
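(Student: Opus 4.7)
My plan rests on three pillars: the stability inequality, the integration-by-parts identities of Lemma~\ref{eq:l2.2}, and a weighted Young inequality to absorb cross terms.

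\textbf{Deriving (\ref{eq:2.3}).} Since $v=-|x|^{-\beta}\Delta u$, the system (\ref{eq:1.1}) is equivalent to $\Delta(|x|^{-\beta}\Delta u)=|x|^{\alpha}|u|^{p-1}u$. Testing this equation against $u\eta^{2}$ for a cutoff $\eta$ and applying identity (\ref{eq:2.1}) with $\zeta=u$ yields
\begin{equation*}
\int|x|^{\alpha}|u|^{p+1}\eta^{2}\,dx=\int\frac{[\Delta(u\eta)]^{2}}{|x|^{\beta}}\,dx+\mathcal{J}[u,\eta],
\end{equation*}
where $\mathcal{J}[u,\eta]$ collects the cross terms in $(\nabla u\cdot\nabla\eta)^{2}$, $u\Delta u\,|\nabla\eta|^{2}$, and $u^{2}$ weighted by derivatives of $\eta$ (plus the $|x|^{-2}\Delta\eta(x\cdot\nabla\eta)$ contribution). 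The stability definition applied to $\zeta=u\eta$ provides the reverse inequality $p\int|x|^{\alpha}|u|^{p+1}\eta^{2}\le\int[\Delta(u\eta)]^{2}/|x|^{\beta}$. Subtracting gives $(p-1)\int|x|^{\alpha}|u|^{p+1}\eta^{2}\le-\mathcal{J}[u,\eta]$; expanding $[\Delta(u\eta)]^{2}=\eta^{2}(\Delta u)^{2}+\cdots$ in the stability bound and using the identity a second time produces a companion estimate with $(p-1)\int\eta^{2}(\Delta u)^{2}/|x|^{\beta}$ on the left. Adding the two places both $|\Delta u|^{2}/|x|^{\beta}$ and $|x|^{\alpha}|u|^{p+1}$ on the left. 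The residual $|\nabla u|^{2}|\nabla\eta|^{2}/|x|^{\beta}$ piece (extracted via Cauchy--Schwarz from $(\nabla u\cdot\nabla\eta)^{2}$) is eliminated by (\ref{eq:2.2}), leaving only $u\Delta u/|x|^{\beta}$ and $u^{2}/|x|^{\beta}$ terms supported on $\operatorname{supp}\nabla\eta$. For (\ref{eq:2.3}), choose the standard cutoff $\eta\equiv 1$ on $B_{R}$, $\operatorname{supp}\eta\subset B_{2R}$, $|\nabla^{k}\eta|\le CR^{-k}$, and use Young's inequality to absorb the $\epsilon|\Delta u|^{2}/|x|^{\beta}$ terms into the left.

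\textbf{Deriving the $\mathfrak{Q}/\mathfrak{R}$ bound and (\ref{eq:2.4}).} For the refined estimate, I take $\eta=\psi^{m}$ with $m$ large and split each cross term by Young's inequality in the form
\begin{equation*}
|\text{cross term}|\le\epsilon\bigl(|x|^{\alpha}|u|^{p+1}\psi^{2m}+|\Delta u|^{2}\psi^{2m}/|x|^{\beta}\bigr)+C_{\epsilon}\,(\text{cutoff-only residue}).
\end{equation*}
Matching exponents so that $|u|$ and $|\Delta u|$ appear with powers $p+1$ and $2$ forces the Young parameters $a=\tfrac{1}{p+1}$, $b=\tfrac{1}{2}$, and residue exponent $\tfrac{p+1}{p-1}$; this produces exactly the weights $|x|^{-(2\alpha+\beta p+\beta)/(p-1)}$ (for pieces involving $u\Delta u$ and $\Delta\psi^{m}$) and $|x|^{-(2\alpha+(\beta+2)(p+1))/(p-1)}$ (for pieces involving $u^{2}$ and the $|x|^{-2}$-singular terms from (\ref{eq:2.1})--(\ref{eq:2.2})). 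The prefactor $\psi^{2(2-m)}$ in $\mathfrak{Q},\mathfrak{R}$ is calibrated so that $(\psi^{2(2-m)}|\Delta\psi^{m}|^{2})^{(p+1)/(p-1)}$ remains bounded as $\psi\to 0$---since $|\Delta\psi^{m}|\sim\psi^{m-2}$, the net power of $\psi$ vanishes, which is why $m$ must be taken sufficiently large. For (\ref{eq:2.4}), plugging in $\psi$ with $\psi\equiv 1$ on $B_{R}$, $\operatorname{supp}\psi\subset B_{2R}$, $|\nabla^{k}\psi|\le CR^{-k}$ gives $\mathfrak{Q}(\psi^{m})\le CR^{-4}$ and $\mathfrak{R}(\psi^{m})\le CR^{-2}$; integrating the weights $|x|^{-\kappa}$ over the annulus then produces in both cases the common exponent $N-4-\beta-\tfrac{8+2\alpha+2\beta}{p-1}$.

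\textbf{Main obstacle.} The delicate point is the bookkeeping: the weights $|x|^{\alpha},|x|^{\beta}$ match the $\psi^{2(2-m)}$ prefactors only for the precise Young exponents identified above, and the singular contributions $|x|^{-2}\Delta\eta(x\cdot\nabla\eta)$ and $|x|^{-\beta-2}$ coming from (\ref{eq:2.1})--(\ref{eq:2.2}) require the observation that on $\operatorname{supp}\nabla\eta$ one has $|x|\sim R$, so that these additional singularities get converted into cutoff-matched $R^{-k}$ factors that align with the declared exponents in $\mathfrak{Q}$ and $\mathfrak{R}$.
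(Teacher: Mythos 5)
Your proposal follows essentially the same route as the paper: test identity (\ref{eq:2.1}) with $\zeta=u$ against the equation, invoke stability with $\zeta=u\eta$, eliminate $|\nabla u|^2$ via (\ref{eq:2.2}), and then upgrade to $\eta=\psi^m$ with Young's inequality absorbing the $|u\Delta u|$ term and H\"older's inequality with exponents $\tfrac{p+1}{2},\tfrac{p+1}{p-1}$ converting the $u^2$ residues into the $\mathfrak{Q},\mathfrak{R}$ integrands. Your bookkeeping of the weight exponents and of the annulus scalings $\mathfrak{Q}(\psi^m)\lesssim R^{-4}$, $\mathfrak{R}(\psi^m)\lesssim R^{-2}$ leading to the exponent $N-4-\beta-\tfrac{8+2\alpha+2\beta}{p-1}$ is correct, so the argument is sound.
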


\noindent {\it proof.}
From the definition of a stable solution $u$, it implies that if we take arbitrarily $\zeta \in C_0^4(\mathbb{R}^N)$,
then we obtain
\begin{equation}\label{eq:2.5}
\int_{\mathbb{R}^N}|x|^{\alpha}|u|^{p-1}u\zeta dx=\int_{\mathbb{R}^N} \dfrac{\Delta u}{|x|^{\beta}}\Delta \zeta dx,
\end{equation}and
\begin{equation}\label{eq:2.6}
p\int_{\mathbb{R}^N}|x|^{\alpha}|u|^{p-1}\zeta^2 dx\le \int_{\mathbb{R}^N}\dfrac{|\Delta \zeta|^2}{|x|^{\beta}}dx.
\end{equation}Now, in (\ref{eq:2.5}), we choose $\zeta=u\psi^2$ with $\psi\in C_0^4({\mathbb{R}^N})$, and find
\begin{equation}\label{eq:2.7}
\int_{\mathbb{R}^N}|x|^{\alpha} |u|^{p+1} \psi^2 dx=\int_{\mathbb{R}^N}\dfrac{\Delta u}{|x|^{\beta}}
\Delta (u\psi^2)dx.
\end{equation}We insert the test function $\zeta=u\psi$ into (\ref{eq:2.6}) and get
\begin{equation*}
p \int_{\mathbb{R}^N}|x|^{\alpha}|u|^{p+1} \psi^2 dx\le \int_{\mathbb{R}^N}\dfrac{[\Delta (u\psi)]^2}{|x|^{\beta}} dx.
\end{equation*}Putting the above inequality and (\ref{eq:2.7}) back into (\ref{eq:2.1}) yields
\begin{align*}
(p-1)\int_{\mathbb{R}^N}& |x|^{\alpha}|u|^{p+1}\psi^2 dx \le
\int_{\mathbb{R}^N}\dfrac{1}{|x|^{\beta}} \Big [4(\nabla u\cdot \nabla \psi)^2-2u\Delta u|\nabla \psi|^2\Big ]dx\\[0.08cm]
& +\int_{\mathbb{R}^N}\dfrac{u^2}{|x|^{\beta}} \Big [2|\nabla (\Delta \psi)\cdot \nabla \psi|+|\Delta \psi|^2
+2\beta |x|^{-2}\Delta \psi (x\cdot \nabla \psi) \Big] dx.
\end{align*}Combining with the identity (\ref{eq:2.2}), we have
\begin{align}\label{eq:2.8}
\int_{\mathbb{R}^N}|x|^{\alpha} & |u|^{p+1} \psi^2 dx \le C \int_{\mathbb{R}^N} \dfrac{|u\Delta u|}{|x|^{\beta}} |\nabla \psi|^2 dx\nonumber \\[0.1cm]
& +C \int_{\mathbb{R}^N} \dfrac{u^2}{|x|^{\beta}} \Big [|\nabla (\Delta \psi)\cdot \nabla \psi|+|\Delta \psi|^2+\left |\Delta |\nabla \psi|^2 \right |\Big ]dx \nonumber \\[0.1cm]
&+ C\int_{\mathbb{R}^N} \dfrac{u^2}{|x|^{\beta+2}}\Big [|\Delta \psi||x \cdot \nabla \psi|+|\nabla \psi|^2+\left |x\cdot \nabla
(|\nabla \psi|^2)\right | \Big ]dx.
\end{align}Since $\Delta (u\psi)=\Delta u \psi+2\nabla u \cdot \nabla \psi+u\Delta \psi$, it implies from (\ref{eq:2.7}), (\ref{eq:2.8}) and Lemma \ref{eq:l2.2}, that
\begin{align}\label{eq:2.9}
\int_{\mathbb{R}^N}\dfrac{|\Delta u|^2}{|x|^{\beta}} \psi^2dx \le & C \int_{\mathbb{R}^N}\dfrac{|u\Delta u|}{|x|^{\beta}} |\nabla \psi|^2 dx \nonumber  +C \int_{\mathbb{R}^N} \dfrac{u^2}{|x|^{\beta}} \Big [|\nabla (\Delta \psi)\cdot \nabla \psi|+|\Delta \psi|^2+\left |\Delta |\nabla \psi|^2 \right |\Big ]dx \nonumber \\[0.1cm]
&+ C\int_{\mathbb{R}^N} \dfrac{u^2}{|x|^{\beta+2}}\Big [|\Delta \psi||x \cdot \nabla \psi|+|\nabla \psi|^2+\left |x\cdot \nabla
(|\nabla \psi|^2)\right | \Big ]dx.
\end{align}\vskip .05in

Replace $\psi$ by $\psi^m$ in (\ref{eq:2.8}) and (\ref{eq:2.9}) with $m>2$ to lead to
\begin{align*}
\int_{\mathbb{R}^N}&\left [\dfrac{|\Delta u|^2}{|x|^{\beta}} +|x|^{\alpha}|u|^{p+1}\right ]\psi^{2m} dx \le C\int_{\mathbb{R}^N}
\dfrac{|u\Delta u|}{|x|^{\beta}} \psi^{2(m-1)} |\nabla \psi|^2 dx\\[0.1cm]
& +C\int_{\mathbb{R}^N}\dfrac{u^2}{|x|^{\beta}} \Big [|\nabla (\Delta \psi^m)\cdot \nabla \psi^m|+|\Delta \psi^m|^2+\left |
\Delta |\nabla \psi^m|^2 \right | \Big ]dx \\[0.1cm]
& + C\int_{\mathbb{R}^N} \dfrac{u^2}{|x|^{\beta+2}}\Big [\left |\Delta \psi^m \right |\left |x \cdot \nabla \psi^m \right |+
\left |\nabla \psi^m \right |^2+\left |x\cdot \nabla
(|\nabla \psi^m|^2)\right | \Big ]dx.
\end{align*}Utilizing Young's inequality, we obtain
\begin{equation*}
\int_{\mathbb{R}^N} \dfrac{|u\Delta u|}{|x|^{\beta}}\psi^{2(m-1)}|\nabla \psi|^2 dx\le \dfrac{1}{2C} \int_{\mathbb{R}^N}
\dfrac{|\Delta u|^2}{|x|^{\beta}} \psi^{2m} dx +C\int_{\mathbb{R}^N} \dfrac{u^2}{|x|^{\beta}}\psi^{2(m-2)}|\nabla \psi|^4dx.
\end{equation*}Thus, it implies
\begin{align*}
\int_{\mathbb{R}^N}\left [\dfrac{|\Delta u|^2}{|x|^{\beta}}+|x|^{\alpha}|u|^{p+1}\right ] & \psi^{2m} dx
\le C\int_{\mathbb{R}^N} \dfrac{u^2}{|x|^{\beta}} \psi^{2(m-2)} \mathfrak{Q}(\psi^m)dx \\[0.1cm]
& +C \int_{\mathbb{R}^N} \dfrac{u^2}{|x|^{\beta+2}} \psi^{2(m-2)} \mathfrak{R}(\psi^m) dx,
\end{align*}where $\mathfrak{Q}(\psi^m)=|\nabla \psi|^4+\psi^{2(2-m)}\Big [|\nabla (\Delta \psi^m)\cdot \nabla \psi^m |+|\Delta \psi^m|^2+
\left |\Delta |\nabla \psi^m|^2 \right | \Big ]$ and $\mathfrak{R}(\psi^m)=\psi^{2(2-m)}
\Big [\left |\Delta \psi^m \right |\left |x \cdot \nabla \psi^m \right |+
\left |\nabla \psi^m \right |^2+\left |x\cdot \nabla
(|\nabla \psi^m|^2)\right | \Big ]$. Taking $(m-2)(p+1)\ge 2m$,
we use H\"{o}lder's inequality to the both terms in the right hand side of the above inequality and get
\begin{align*}
\int_{\mathbb{R}^N} \dfrac{u^2}{|x|^{\beta}} \psi^{2(m-2)} \mathfrak{Q}(\psi^m)dx =\int_{\mathbb{R}^N} |x|^{\frac{2\alpha}{p+1}}u^2 \psi^{2(m-2)}|x|^{-\frac{2\alpha}{p+1}-\beta}\mathfrak{Q}(\psi^m)dx \\[0.1cm]
\le \left (\int_{\mathbb{R}^N}|x|^{\alpha}|u|^{p+1}\psi^{2m}dx \right )^{\frac{2}{p+1}}
\left (\int_{\mathbb{R}^N}|x|^{-\frac{2\alpha+\beta p+\beta}{p-1}}\mathfrak{Q}(\psi^m)^{\frac{p+1}{p-1}}dx \right )^{\frac{p-1}{p+1}},
\end{align*}and
\begin{align*}
\int_{\mathbb{R}^N} \dfrac{u^2}{|x|^{\beta+2}} \psi^{2(m-2)} \mathfrak{R}(\psi^m)dx =\int_{\mathbb{R}^N} |x|^{\frac{2\alpha}{p+1}}u^2 \psi^{2(m-2)}|x|^{-\frac{2\alpha}{p+1}-\beta-2}\mathfrak{R}(\psi^m)dx \\[0.1cm]
\le \left (\int_{\mathbb{R}^N}|x|^{\alpha}|u|^{p+1}\psi^{2m}dx \right )^{\frac{2}{p+1}}
\left (\int_{\mathbb{R}^N}|x|^{-\frac{2\alpha+(\beta+2)(p+1)}{p-1}}\mathfrak{R}(\psi^m)^{\frac{p+1}{p-1}}dx \right )^{\frac{p-1}{p+1}}.
\end{align*}
Therefore, we find
\begin{align*}
\int_{\mathbb{R}^N}\left [\dfrac{|\Delta u|^2}{|x|^{\beta}}+|x|^{\alpha}|u|^{p+1}\right ] & \psi^{2m} dx
\le C\int_{\mathbb{R}^N}|x|^{-\frac{2\alpha+\beta p+\beta}{p-1}}\mathfrak{Q}(\psi^m)^{\frac{p+1}{p-1}}dx\\[0.1cm]
& +C\int_{\mathbb{R}^N}|x|^{-\frac{2\alpha+(\beta+2)(p+1)}{p-1}}\mathfrak{R}(\psi^m)^{\frac{p+1}{p-1}}dx.
\end{align*}

Let us choose $\psi \in C_0^4(B_{2R}(x))$ a cut-off function verifying $0 \le \psi \le 1$, $\psi \equiv 1$ in $B_R(x)$, and  $|\nabla^k \psi|
\le \dfrac{C}{R^k}$ for $k\le 3$. Substituting $\psi$ into (\ref{eq:2.8}), (\ref{eq:2.9}) and the above inequality, we have
\begin{align*}
& \int_{B_R(x)}\left (\dfrac{|\Delta u|^2}{|z|^{\beta}} +|z|^{\alpha}|u|^{p+1} \right )dz\\[0.1cm]
& \le CR^{-2} \int_{B_{2R}(x)\backslash B_R(x)}\dfrac{|u\Delta u|}{|z|^{\beta}} dz +CR^{-4}\int_{B_{2R}(x)\backslash B_R(x)}\dfrac{u^2}{|z|^{\beta}} dz.
\end{align*}and
\begin{equation*}
\int_{B_R(x)}\left [\dfrac{|\Delta u|^2}{|z|^{\beta}}+|z|^{\alpha}|u|^{p+1}\right ]\psi^{2m} dz
\le CR^{N-4-\beta-\frac{8+2\alpha+2\beta}{p-1}}. \eqno \square
\end{equation*}\vskip .08in

\begin{remark}\label{eq:r2.1}
If the domain $\mathbb{R}^N$ is replaced by the subset $\Omega$ (boundedness or not) in Lemma \ref{eq:l2.1}-Lemma \ref{eq:al2.3}, then the
conclusions are also true.
\end{remark}

\begin{lemma}{\em (Pohozaev identity)}
Let $u$ be a classical solution of (\ref{eq:1.1}), then we have
\begin{align}\label{eq:2.10}
& \dfrac{N-4-\beta}{2}\int_{\Omega}\dfrac{|\Delta u|^2}{|x|^{\beta}} dx -\dfrac{N+\alpha}{p+1} \int_{\Omega} |x|^{\alpha}|u|^{p+1} dx \nonumber \\[0.08cm]
&=\dfrac{1}{2} \int_{\partial \Omega} \dfrac{|\Delta u|^2}{|x|^{\beta}} (x \cdot \nu) dS
-\dfrac{1}{p+1} \int_{\partial \Omega} |x|^{\alpha}|u|^{p+1}(x \cdot \nu) dS \nonumber \\[0.1cm]
&\;\;\ -\int_{\partial \Omega}\dfrac{\Delta u}{|x|^{\beta}} \nabla (x\cdot \nabla u)\cdot \nu dS+
\int_{\partial \Omega}\nabla \left (\dfrac{\Delta u}{|x|^{\beta}}\right )\cdot \nu (x \cdot \nabla u)dS,
\end{align}where $\nu$ denotes the outward unit normal vector field.
\end{lemma}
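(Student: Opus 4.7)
The plan is to reduce the system to a single fourth-order weighted equation and then derive the identity by pairing it with the standard dilation multiplier $x\cdot\nabla u$. From $-\Delta u=|x|^\beta v$ one has $v=-|x|^{-\beta}\Delta u$, so substituting into the second equation gives
\begin{equation*}
\Delta\!\left(\frac{\Delta u}{|x|^\beta}\right)=|x|^\alpha|u|^{p-1}u\quad\text{in }\Omega.
\end{equation*}
I would multiply this identity by $x\cdot\nabla u$ and integrate over $\Omega$, then process the two sides independently.

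For the right-hand side I would use $|u|^{p-1}u\,(x\cdot\nabla u)=\tfrac{1}{p+1}\,x\cdot\nabla(|u|^{p+1})$ together with the weighted divergence computation $\nabla\cdot(|x|^\alpha x)=(N+\alpha)|x|^\alpha$. A single integration by parts then yields the interior term $-\tfrac{N+\alpha}{p+1}\int_\Omega|x|^\alpha|u|^{p+1}\,dx$ and the boundary contribution $\tfrac{1}{p+1}\int_{\partial\Omega}|x|^\alpha|u|^{p+1}(x\cdot\nu)\,dS$.

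For the left-hand side I would apply Green's formula twice. The first pass produces the boundary term $\int_{\partial\Omega}\nabla(|x|^{-\beta}\Delta u)\cdot\nu\,(x\cdot\nabla u)\,dS$; the second produces $-\int_{\partial\Omega}|x|^{-\beta}\Delta u\,\nabla(x\cdot\nabla u)\cdot\nu\,dS$ together with the interior integral $\int_\Omega|x|^{-\beta}\Delta u\,\Delta(x\cdot\nabla u)\,dx$. Using the classical identity $\Delta(x\cdot\nabla u)=2\Delta u+x\cdot\nabla(\Delta u)$, this last integral splits into $2\int|x|^{-\beta}|\Delta u|^2\,dx$ and $\tfrac{1}{2}\int|x|^{-\beta}x\cdot\nabla(|\Delta u|^2)\,dx$. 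A final integration by parts on the second piece, via $\nabla\cdot(|x|^{-\beta}x)=(N-\beta)|x|^{-\beta}$, contributes $-\tfrac{N-\beta}{2}\int|x|^{-\beta}|\Delta u|^2\,dx$ and the boundary term $\tfrac{1}{2}\int_{\partial\Omega}|x|^{-\beta}|\Delta u|^2(x\cdot\nu)\,dS$. The interior coefficient then collapses to $2-\tfrac{N-\beta}{2}=-\tfrac{N-4-\beta}{2}$, so after moving it across the equality one recovers precisely the $\tfrac{N-4-\beta}{2}$ appearing in (\ref{eq:2.10}), while the four boundary pieces line up with the ones on the right of (\ref{eq:2.10}).

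The computation is essentially bookkeeping; the only genuine pitfall is the careful tracking of the two weighted divergence formulas $\nabla\cdot(|x|^\alpha x)=(N+\alpha)|x|^\alpha$ and $\nabla\cdot(|x|^{-\beta}x)=(N-\beta)|x|^{-\beta}$, which are the sole places where the weights $\alpha,\beta$ enter the coefficients. No singular contribution arises at $x=0$: either the origin is an interior point (where $u$ is classical and the weights are integrable by the hypotheses $\alpha>-4$, $\beta\le\tfrac{N-4}{2}$), or it is excluded from $\Omega$. Collecting the interior and boundary contributions of the two sides and transposing gives the stated identity.
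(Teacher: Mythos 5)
Your argument is the same as the paper's: pass to the scalar fourth-order equation $\Delta(|x|^{-\beta}\Delta u)=|x|^{\alpha}|u|^{p-1}u$, pair with the multiplier $x\cdot\nabla u$, integrate by parts twice on the left and once on the right, using $\Delta(x\cdot\nabla u)=2\Delta u+x\cdot\nabla(\Delta u)$ and the two weighted divergence formulas $\nabla\cdot(|x|^{\alpha}x)=(N+\alpha)|x|^{\alpha}$, $\nabla\cdot(|x|^{-\beta}x)=(N-\beta)|x|^{-\beta}$. The interior coefficient $2-\frac{N-\beta}{2}=-\frac{N-4-\beta}{2}$ and the four boundary terms line up exactly with (\ref{eq:2.10}), so the identity follows.

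The one place where you are slightly more cavalier than the paper is the treatment of the origin when $0\in\Omega$. The weight $|x|^{-\beta}$ (and a fortiori $\nabla(|x|^{-\beta}\Delta u)$, which contains $|x|^{-\beta-2}$) is not $C^1$ at $x=0$, so integration by parts cannot be performed directly over $\Omega$. The paper instead works on $\Omega\setminus B_{\varepsilon}$, keeps the boundary integrals on $\partial B_{\varepsilon}$ explicitly in (\ref{eq:2.12}) and (\ref{eq:2.13}), and lets $\varepsilon\to 0$ at the end. Your remark that ``no singular contribution arises'' is true, but it deserves the short verification: the four inner-boundary terms scale like $\varepsilon^{N+\alpha}$, $\varepsilon^{N-\beta}$, $\varepsilon^{N-1-\beta}$ and $\varepsilon^{N-1-\beta}$ respectively, all of which tend to $0$ under the standing hypotheses $N\ge 5$, $\alpha>-4$, $0\le\beta\le\frac{N-4}{2}$. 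Apart from that bookkeeping, your proof is the paper's proof.
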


\begin{proof}
Multiplying (\ref{eq:1.1}) by $(x \cdot \nabla u)$, we obtain
\begin{equation*}
\Delta (|x|^{-\beta} \Delta u)(x \cdot \nabla u)=|x|^{\alpha} |u|^{p-1}u(x\cdot \nabla u),\quad \mbox{in}\;\; \Omega \backslash \{0\}.
\end{equation*}Hence, for every small $\varepsilon >0$, we have
\begin{equation}\label{eq:2.11}
\int_{\Omega \backslash B_{\varepsilon}}\Delta (|x|^{-\beta} \Delta u)(x \cdot \nabla u) dx
=\int_{\Omega \backslash B_{\varepsilon}} |x|^{\alpha} |u|^{p-1}u(x\cdot \nabla u) dx.
\end{equation}Apply the divergence theorem and integration by parts to calculate the right hand side and
the left hand side of (\ref{eq:2.11}) respectively, and get
\begin{align}\label{eq:2.12}
& \dfrac{1}{p+1} \int_{\Omega \backslash B_{\varepsilon}} |x|^{\alpha} \left (x \cdot \nabla \left (|u|^{p+1}\right )\right ) dx =-\dfrac{N+\alpha}{p+1}\int_{\Omega \backslash B_{\varepsilon}} |x|^{\alpha}|u|^{p+1} dx \nonumber \\[0.2cm]
& +\dfrac{1}{p+1} \int_{\partial \Omega}|x|^{\alpha}|u|^{p+1}(x \cdot \nu)dS-\dfrac{1}{p+1} \int_{\partial B_{\varepsilon}}|x|^{\alpha}|u|^{p+1}(x \cdot \nu)dS,
\end{align}and
\begin{align*}
& \int_{\Omega \backslash B_{\varepsilon}}\Delta (|x|^{-\beta} \Delta u)(x \cdot \nabla u) dx=\sum\limits_{i,j=1}^N\int_{\Omega \backslash B_{\varepsilon}} (|x|^{-\beta}\Delta u)_{x_ix_i}
(x^ju_{x_j})dx \\[0.2cm]
&=\sum\limits_{i,j=1}^N \int_{\Omega \backslash B_{\varepsilon}}(|x|^{-\beta}\Delta u)(x^ju_{x_j})_{x_ix_i} dx-\int_{\partial \Omega} |x|^{-\beta}\Delta u \nabla (x\cdot \nabla u)\cdot \nu dS  \\[0.2cm]
&\quad +\int_{\partial B_{\varepsilon}} |x|^{-\beta}\Delta u \nabla (x\cdot \nabla u)\cdot \nu dS +\int_{\partial \Omega} \nabla (|x|^{-\beta}\Delta u)\cdot \nu (x\cdot \nabla u)dS  \\[0.2cm]
& \quad -\int_{\partial B_{\varepsilon}} \nabla (|x|^{-\beta}\Delta u)\cdot \nu (x\cdot \nabla u)dS.
\end{align*}Again computing the first term in the right hand side of the above equality yields
\begin{align*}
& \sum\limits_{i,j=1}^N \int_{\Omega \backslash B_{\varepsilon}}(|x|^{-\beta}\Delta u)(x^ju_{x_j})_{x_ix_i} dx =\sum\limits_{i,j=1}^N
\int_{\Omega \backslash B_{\varepsilon}} |x|^{-\beta}\Delta u \Big [2\delta^{ij}u_{x_ix_j} +x^ju_{x_ix_ix_j}\Big ]dx\\[0.2cm]
&=-\dfrac{N-4-\beta}{2}\int_{\Omega \backslash B_{\varepsilon}}|x|^{-\beta}|\Delta u|^2 dx+\int_{\partial \Omega} \dfrac{|\Delta u|^2}{2} |x|^{-\beta} x\cdot \nu dS-\int_{\partial B_{\varepsilon}} \dfrac{|\Delta u|^2}{2} |x|^{-\beta} x\cdot \nu dS,
\end{align*}and putting back into the above equality leads to
\begin{align}\label{eq:2.13}
& \int_{\Omega \backslash B_{\varepsilon}}\Delta (|x|^{-\beta} \Delta u)(x \cdot \nabla u) dx=-\dfrac{N-4-\beta}{2}\int_{\Omega \backslash B_{\varepsilon}}\dfrac{|\Delta u|^2}{|x|^{\beta}} dx+\int_{\partial \Omega} \dfrac{|\Delta u|^2}{2} |x|^{-\beta} x\cdot \nu dS \nonumber \\[0.2cm]
&\quad -\int_{\partial \Omega} \dfrac{\Delta u}{|x|^{\beta}} \nabla (x\cdot \nabla u)\cdot \nu dS
+\int_{\partial \Omega} \nabla \left (\dfrac{\Delta u}{|x|^{\beta}} \right )\cdot \nu (x\cdot \nabla u)dS-\int_{\partial B_{\varepsilon}} \dfrac{|\Delta u|^2}{2} |x|^{-\beta} x\cdot \nu dS \nonumber \\[0.2cm]
& \quad +\int_{\partial B_{\varepsilon}} \dfrac{\Delta u}{|x|^{\beta}} \nabla (x\cdot \nabla u)\cdot \nu dS
-\int_{\partial B_{\varepsilon}} \nabla \left (\dfrac{\Delta u}{|x|^{\beta}} \right )\cdot \nu (x\cdot \nabla u)dS.
\end{align}Since $u\in C^4(\Omega)$, $\alpha>-4$ and $0 \le \beta \le \dfrac{N-4}{2}$, the above integrations are well-defined. Now, we insert (\ref{eq:2.12}) and (\ref{eq:2.13}) into (\ref{eq:2.11}), take $\varepsilon \to 0$ and pass to the limit to obtain the identity (\ref{eq:2.10}).
\end{proof}

Inspired by the ideas of \cite{Davila,Du,Hu}, we will apply {\it Pohozaev identity} to construct a monotonicity formula
which is a crucial tool. More precisely, choose $u\in W^{4,2}_{loc}(\Omega)$ and $|x|^{\alpha}|u|^{p+1} \in L_{loc}^1(\Omega)$,
fix $x\in \Omega$, let $0<r<R$ and $B_r(x) \subset B_R(x) \subset \Omega$, and define
\begin{align}\label{eq:2.14}
\mathcal{M}(r;x,u)=& r^{\delta} \int_{B_r(x)} \dfrac{1}{2}\dfrac{|\Delta u|^2}{|z|^{\beta}}-\dfrac{1}{p+1}|z|^{\alpha}|u|^{p+1} \nonumber \\[0.1cm]
& +\dfrac{(1+\beta)\lambda}{2}(N-2-\lambda) \left (r^{2\lambda+1-N}\int_{\partial B_r(x)} u^2 \right ) \nonumber \\[0.1cm]
& +\dfrac{\lambda}{2} (N-2-\lambda) \dfrac{d}{dr} \left (r^{2\lambda+2-N}
\int_{\partial B_r(x)} u^2 \right ) \nonumber \\[0.1cm]
&+\frac{r^3}{2}\dfrac{d}{dr} \left [r^{2\lambda+1-N}\int_{\partial B_r(x)} \left (\lambda r^{-1}u+\dfrac{\partial u}{\partial r}\right )^2 \right ] \nonumber \\[0.1cm]
& +\dfrac{1+\beta-\lambda}{2}r^{2\lambda +3-N} \int_{\partial B_r(x)} \left (|\nabla u|^2-\left |\dfrac{\partial u}{\partial r}\right |^2 \right ) \nonumber \\[0.1cm]
& +\dfrac{1}{2}\dfrac{d}{dr} \left [ r^{2\lambda+4-N} \int_{\partial B_r(x)} \left (|\nabla u|^2-\left |\dfrac{\partial u}{\partial r}\right |^2 \right ) \right ].
\end{align}Here and in the following, we always set $\delta:=\dfrac{8+2\alpha+2\beta}{p-1}+4+\beta-N$ and $\lambda:=\dfrac{4+\alpha+\beta}{p-1}$.\vskip .06in

\begin{theorem}\label{eq:t2.1}
Let $p \ge \dfrac{N+4+2\alpha+\beta}{N-4-\beta}$ and let $u\in W^{2,2}_{loc}(\Omega)$,
$|x|^{\alpha} |u|^{p+1} \in L_{loc}^1(\Omega)$ and $|x|^{-\beta}|\Delta u|^2 \in L_{loc}^1(\Omega)$
be a weak solution of (\ref{eq:1.1}). Then $\mathcal{M}(r;x,u)$ is nondecreasing in $r \in (0,R)$ and satisfies the inequality
\begin{equation*}
\dfrac{d}{dr}\mathcal{M}(r;0,u) \ge C(N,p, \alpha, \beta) r^{-N+2+2\lambda} \int_{\partial B_r}
\left (\lambda r^{-1}u+\dfrac{\partial u}{\partial r} \right )^2 dS,
\end{equation*}where $C(N,p,\alpha,\beta)=(N-2)(2+\beta)+2\lambda(N-4-\beta-\lambda)-\dfrac{\beta^2}{8}>0$.\vskip .06in

Furthermore, if $\mathcal{M}(r;0,u)\equiv \mbox{const}$, for all $r \in (0,R)$, then $u$ is
homogeneous in $B_R\backslash \{0\}$, i.e., $\forall \mu \in (0,1]$, $x \in B_R\backslash \{0\}$,
\begin{equation*}
u(\mu x)=\mu^{-\frac{4+\alpha+\beta}{p-1}}u(x).
\end{equation*}
\end{theorem}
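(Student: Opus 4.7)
\textbf{Overall strategy.} The plan is to differentiate $\mathcal{M}(r;0,u)$ directly in $r$, use the Pohozaev identity (\ref{eq:2.10}) on the ball $\Omega=B_r$ to convert every bulk integral into boundary integrals on $\partial B_r$, and then reorganise the resulting surface expressions into the sum of a non-negative square and non-negative tangential contributions. The exponents appearing in (\ref{eq:2.14}), namely $\delta = 2\lambda+4+\beta-N$ and $\lambda=(4+\alpha+\beta)/(p-1)$, are precisely the scaling exponents of the equation under $u(x)\mapsto \mu^{\lambda}u(\mu x)$: this is what makes the ``bulk energy'' piece $r^\delta\int_{B_r}\!\big[\tfrac12|z|^{-\beta}|\Delta u|^2-\tfrac{1}{p+1}|z|^{\alpha}|u|^{p+1}\big]$ scale-invariant, and hence the only reason $\mathcal{M}$ is not constant a priori is the presence of boundary-correction terms, which we will compensate exactly.

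\textbf{Step~1: the bulk derivative via Pohozaev.} On $\partial B_r$ one has $\nu=x/r$, so $x\cdot\nu=r$ and (\ref{eq:2.10}) becomes an identity between $\int_{B_r}|z|^{-\beta}|\Delta u|^2$, $\int_{B_r}|z|^\alpha|u|^{p+1}$ and surface integrals on $\partial B_r$. First I differentiate the bulk term of $\mathcal{M}$:
\begin{equation*}
\frac{d}{dr}\Big[r^\delta\!\!\int_{B_r}\!\tfrac12|z|^{-\beta}|\Delta u|^2-\tfrac{1}{p+1}|z|^{\alpha}|u|^{p+1}\Big]=\delta r^{\delta-1}(\cdot)+r^\delta\!\!\int_{\partial B_r}\!\big[\cdots\big]dS,
\end{equation*}
and replace the factor $\int_{B_r}(\cdots)$ produced by the $\delta r^{\delta-1}$ term using the Pohozaev identity. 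With the polar splitting $\nabla u=(\partial_r u)\nu+\nabla_\tau u$ and $\Delta u = \partial_{rr}u+\tfrac{N-1}{r}\partial_r u+\tfrac{1}{r^{2}}\Delta_\sigma u$, I expand
$\nabla(x\!\cdot\!\nabla u)\cdot\nu=\partial_r u+r\partial_{rr}u$ and $\nabla(|x|^{-\beta}\Delta u)\cdot\nu=\partial_r(|x|^{-\beta}\Delta u)$, which unfolds every boundary term of (\ref{eq:2.10}) into an expression in $u,\partial_r u,\partial_{rr}u,|\nabla_\tau u|^2$ on $\partial B_r$. The weight $|x|^{-\beta}$ produces an additional $\beta r^{-1}$ correction from $\partial_r(|x|^{-\beta}\Delta u)$ which must be carried through.

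\textbf{Step~2: derivatives of the correction terms and collapse into a square.} Using the standard formula $\tfrac{d}{dr}\int_{\partial B_r}\!f\,dS=\int_{\partial B_r}\!(\partial_r f+(N-1)r^{-1}f)\,dS$, I compute the $r$-derivatives of the five correction terms in (\ref{eq:2.14}) and add them to the boundary quantity produced in Step~1. The whole combination has been engineered so that, after elementary algebra, all $u\partial_r u$, $(\partial_{rr}u)^2$ and mixed $u\partial_{rr}u$ terms cancel and what remains is
\begin{equation*}
\frac{d}{dr}\mathcal{M}(r;0,u)=C(N,p,\alpha,\beta)\,r^{-N+2+2\lambda}\!\int_{\partial B_r}\!\!\big(\lambda r^{-1}u+\partial_r u\big)^2 dS + (\text{nonneg.\ tangential terms}),
\end{equation*}
where $C(N,p,\alpha,\beta)=(N-2)(2+\beta)+2\lambda(N-4-\beta-\lambda)-\beta^2/8$. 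The hypothesis $p\ge (N+4+2\alpha+\beta)/(N-4-\beta)$ is equivalent to $\lambda\le (N-4-\beta)/2$, which yields $\lambda(N-4-\beta-\lambda)\ge 0$ and a short direct check (using $N\ge 5$ and $0\le\beta\le (N-4)/2$) gives $C(N,p,\alpha,\beta)>0$. Monotonicity and the stated inequality follow. For $u\in W^{2,2}_{\mathrm{loc}}$, the Pohozaev identity is justified by a standard mollification argument on compact subsets of $\Omega\setminus\{0\}$, the limit being legal because $|x|^{\alpha}|u|^{p+1}$ and $|x|^{-\beta}|\Delta u|^2$ are in $L^1_{\mathrm{loc}}$.

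\textbf{Step~3: rigidity.} If $\mathcal{M}(\cdot;0,u)$ is constant on $(0,R)$, then the derivative identity of Step~2 forces $\lambda r^{-1}u+\partial_r u\equiv 0$ on $\partial B_r$ for a.e.\ $r\in(0,R)$, and the tangential terms to vanish as well. The ODE $\partial_r u=-\lambda r^{-1}u$ along each ray integrates to $u(x)=|x|^{-\lambda}\varphi(x/|x|)$, i.e.\ $u(\mu x)=\mu^{-\lambda}u(x)$ for every $\mu\in(0,1]$ with $\lambda=(4+\alpha+\beta)/(p-1)$, which is the claimed homogeneity.

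\textbf{Main obstacle.} The sole genuine difficulty is the bookkeeping in Step~2: the weighted factor $|x|^{-\beta}$ produces several extra boundary terms not present in the unweighted biharmonic case of \cite{Davila}, and one must verify that the coefficients of every non-square monomial (in $u,\partial_r u,\partial_{rr}u,\nabla_\tau u$) in the combined derivative cancel exactly. Positivity of $C(N,p,\alpha,\beta)$ in the full range $0\le\beta\le(N-4)/2$, $\alpha>-4$, $N\ge 5$ is then a short, but crucial, algebraic check postponed to the end.
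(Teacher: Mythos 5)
Your high-level plan coincides with the paper's: differentiate the scale-invariant bulk energy, replace the resulting bulk integrals using the Pohozaev identity (\ref{eq:2.10}) and the first-variation identity (\ref{eq:2.16}), and then reorganise the surface integrals into a sum of nonnegative squares plus total $r$-derivatives that are absorbed into the correction terms of (\ref{eq:2.14}). Your equivalence $p\ge \tfrac{N+4+2\alpha+\beta}{N-4-\beta}\iff\lambda\le\tfrac{N-4-\beta}{2}$ and the positivity check for $C(N,p,\alpha,\beta)$ are both correct (the worst case in $\beta$ is $\beta=0$, giving $C\ge 2(N-2)>0$).

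However, the proposal omits the device that actually makes Step~2 executable: the paper introduces the rescaled function $u^{\kappa}(x)=\kappa^{\lambda}u(\kappa x)$ and systematically converts all radial derivatives on $\partial B_{\kappa}$ into dilation derivatives $\tfrac{d}{d\kappa}$ of quantities evaluated on the fixed sphere $\partial B_{1}$ via (\ref{eq:2.22})--(\ref{eq:2.24}). It is precisely this change of variables that reveals the correction terms in (\ref{eq:2.14}) as total $\kappa$-derivatives of $\int_{\partial B_1}(u^{\kappa})^2$, $\int_{\partial B_1}|\nabla_{\theta}u^{\kappa}|^2$ and $\int_{\partial B_1}(\tfrac{du^{\kappa}}{d\kappa})^2$, and splits the computation cleanly into a radial block $\mathfrak{T}_1$ and a tangential block $\mathfrak{T}_2$. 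Working directly on $\partial B_r$ as you propose would require differentiating the five correction terms (some of which already contain a $\tfrac{d}{dr}$), producing third-order radial derivatives that have to be matched against those in the Pohozaev boundary integrals; without the dilation variable, there is no transparent way to recognise the resulting combination as a square plus derivatives. A second, smaller inaccuracy: the $(\partial_{rr}u)$ terms do not cancel. In the paper they survive as the nonnegative square $2\kappa\bigl[\kappa\tfrac{d^2u^{\kappa}}{d\kappa^2}+(1+\tfrac{\beta}{4})\tfrac{du^{\kappa}}{d\kappa}\bigr]^2$, which is then simply discarded; claiming cancellation rather than discard is what produces the incorrect numerical coefficient $(4+\beta)$ giving exactly $C$ rather than an inequality. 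You should rewrite Step~2 in terms of $u^{\kappa}$ and its $\kappa$-derivatives before claiming that the combination closes up.
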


\noindent {\it Proof.}
Define a function by
\begin{equation*}
\mathcal{F}(\kappa):=\kappa^{\delta} \int_{B_{\kappa}} \left (\dfrac{1}{2}\dfrac{|\Delta u|^2}{|x|^{\beta}}-\dfrac{1}{p+1}|x|^{\alpha}|u|^{p+1} \right )dx.
\end{equation*}Differentiating the function $\mathcal{F}(\kappa)$ in $\kappa$ arrives at
\begin{align}\label{eq:2.15}
\dfrac{d\mathcal{F}(\kappa)}{d\kappa} = &\delta \kappa^{\delta-1}\int_{B_{\kappa}} \left (\dfrac{1}{2}\dfrac{|\Delta u|^2}{|x|^{\beta}}-\dfrac{1}{p+1}|x|^{\alpha}|u|^{p+1} \right )dx \nonumber \\[0.1cm]
&+\kappa^{\delta }\int_{\partial B_{\kappa}} \left (\dfrac{1}{2}\dfrac{|\Delta u|^2}{|x|^{\beta}}-\dfrac{1}{p+1}|x|^{\alpha}|u|^{p+1} \right )dS.
\end{align}Multiply the equation (\ref{eq:1.1}) by $u$ and
integrate by parts to get
\begin{align*}
\int_{\Omega}|x|^{\alpha}|u|^{p+1} dx & =-\int_{\Omega} \nabla \left (\dfrac{\Delta u}{|x|^{\beta}} \right )\cdot \nabla u dx+\int_{\partial \Omega}\dfrac{\partial }{\partial \nu}
\left (\dfrac{\Delta u}{|x|^{\beta}} \right ) udS\\[0.1cm]
&=\int_{\Omega}\dfrac{|\Delta u|^2}{|x|^{\beta}} dx-\int_{\partial \Omega}\dfrac{\Delta u}{|x|^{\beta}}(\nabla u\cdot \nu )dS+\int_{\partial \Omega}\dfrac{\partial }{\partial \nu}
\left (\dfrac{\Delta u}{|x|^{\beta}} \right ) udS,
\end{align*}implies
\begin{equation}\label{eq:2.16}
\int_{\Omega}\dfrac{|\Delta u|^2}{|x|^{\beta}} dx-\int_{\Omega}|x|^{\alpha}|u|^{p+1} dx=\int_{\partial \Omega}\dfrac{\Delta u}{|x|^{\beta}}(\nabla u\cdot \nu )dS-\int_{\partial \Omega}\dfrac{\partial }{\partial \nu}
\left (\dfrac{\Delta u}{|x|^{\beta}} \right ) udS.
\end{equation}In addition, it is easily to see that
\begin{align*}
&\delta \kappa^{\delta-1}\int_{B_{\kappa}} \left (\dfrac{1}{2}\dfrac{|\Delta u|^2}{|x|^{\beta}}-\dfrac{1}{p+1}|x|^{\alpha}|u|^{p+1} \right )dx\\[0.15cm]
&=\lambda \kappa^{\delta-1} \int_{B_{\kappa}} \left (\dfrac{|\Delta u|^2}{|x|^{\beta}}  -|x|^{\alpha}|u|^{p+1}\right ) dx \;\;\ -\kappa^{\delta-1}\int_{B_{\kappa}}\left ( \dfrac{N-4-\beta}{2}\dfrac{|\Delta u|^2}{|x|^{\beta}} -\dfrac{N+\alpha}{p+1}  |x|^{\alpha}|u|^{p+1} \right ) dx.
\end{align*}Therefore, combining the result with (\ref{eq:2.10}), (\ref{eq:2.15}) and (\ref{eq:2.16}), we obtain
\begin{align}\label{eq:2.17}
& \dfrac{d\mathcal{F}(\kappa)}{d\kappa}= \lambda \kappa^{\delta -1} \left [\int_{\partial B_{\kappa}}\dfrac{\Delta u}{|x|^{\beta}}(\nabla u\cdot \nu )dS-\int_{\partial B_{\kappa}}\dfrac{\partial }{\partial \nu} \left (\dfrac{\Delta u}{|x|^{\beta}} \right ) udS \right ] \nonumber \\[0.1cm]
& \; \ + \kappa^{\delta -1}\int_{\partial B_{\kappa}} \dfrac{\Delta u}{|x|^{\beta}} \nabla (x\cdot \nabla u)\cdot \nu dS -\kappa^{\delta -1}
\int_{\partial B_{\kappa}}\nabla \left (\dfrac{\Delta u}{|x|^{\beta}} \right )\cdot \nu (x \cdot \nabla u)dS.
\end{align}\vskip .05in

Denote $u^{\kappa}(x):=\kappa^{\frac{4+\alpha+\beta}{p-1}}u(\kappa x)$. Now, computing the first term in the right hand side of (\ref{eq:2.17}) leads to
\begin{align}\label{eq:2.18}
\lambda \kappa^{\delta-1}\int_{\partial B_{\kappa}} |x|^{-\beta} \Delta u (\nabla u \cdot \nu)dS &=\lambda \kappa^2 \int_{\partial B_R}
\Delta \kappa^{\frac{4+\alpha+\beta}{p-1}} u \left (\nabla \kappa^{\frac{4+\alpha+\beta}{p-1}} u \cdot \nu \right ) dS \cdot \kappa^{1-N} \nonumber \\[0.1cm]
&= \dfrac{\lambda}{\kappa} \int_{\partial B_1} \Delta u^{\kappa} (\nabla u^{\kappa} \cdot \nu)d \sigma.
\end{align}Similarly,  we calculate the second term in the right hand side of (\ref{eq:2.17}) and get
\begin{align}\label{eq:2.19}
& \lambda \kappa^{\delta-1}\int_{\partial B_{\kappa}}\dfrac{\partial }{\partial \nu} \left (\dfrac{\Delta u}{|x|^{\beta}} \right ) udS  =
\lambda \kappa^{\delta-1}\int_{\partial B_{\kappa}}|x|^{-\beta} \left [(\nabla (\Delta u)\cdot \nu)-\beta|x|^{-2}\Delta u(x\cdot \nu) \right ] udS \nonumber \\[0.2cm]
& =\lambda \kappa^2 \int_{\partial B_{\kappa}} \left [\left (\nabla \left (\Delta  \kappa^{\frac{4+\alpha+\beta}{p-1}}u\right )\cdot \nu\right )-
\beta \kappa^{-2}\Delta \left ( \kappa^{\frac{4+\alpha+\beta}{p-1}}u\right )(x\cdot \nu) \right ] \kappa^{\frac{4+\alpha+\beta}{p-1}}udS \cdot \kappa^{1-N} \nonumber \\[0.2cm]
& =\dfrac{\lambda}{\kappa}\int_{\partial B_1} \Big [(\nabla (\Delta u^{\kappa}) \cdot \nu) -\beta \Delta u^{\kappa} \Big ] u^{\kappa} d \sigma.
\end{align}Similar to the above calculation, we find
\begin{align}\label{eq:2.20}
& \kappa^{\delta -1}\int_{\partial B_{\kappa}} |x|^{-\beta} \Delta u \nabla (x\cdot \nabla u)\cdot \nu dS =\dfrac{1}{\kappa}\int_{\partial B_1} \Delta u^{\kappa} \nabla \left (x\cdot \nabla u^{\kappa} \right ) \cdot \nu d \sigma,
\end{align}and
\begin{align}\label{eq:2.21}
& \kappa^{\delta -1}
\int_{\partial B_{\kappa}}\nabla (|x|^{-\beta} \Delta u)\cdot \nu (x \cdot \nabla u)dS =\dfrac{1}{\kappa}\int_{\partial B_1} \Big [\left (\nabla \left (\Delta u^{\kappa}\right ) \cdot \nu\right ) -\beta \Delta u^{\kappa}\Big ]\left (x \cdot \nabla u^{\kappa} \right )d \sigma.
\end{align}We use spherical coordinates $r=|x|$, $\theta=\dfrac{x}{|x|}\in \mathbb{S}^{N-1}$ and
write $u^{\kappa}(x)=u^{\kappa}(r,\theta)$, then we insert (\ref{eq:2.18})-(\ref{eq:2.21}) into (\ref{eq:2.17}) to obtain
\begin{align*}
\dfrac{d\mathcal{F}(\kappa)}{d\kappa}=& \dfrac{\lambda}{\kappa}\int_{\partial B_1} \Delta u^{\kappa} (\nabla u^{\kappa} \cdot \nu)d \sigma -\dfrac{\lambda}{\kappa}
\int_{\partial B_1} \Big [(\nabla (\Delta u^{\kappa}) \cdot \nu) -\beta \Delta u^{\kappa} \Big ] u^{\kappa} d \sigma \\[0.1cm]
&+\dfrac{1}{\kappa} \int_{\partial B_1} \Delta u^{\kappa} \nabla \left (x\cdot \nabla u^{\kappa} \right ) \cdot \nu d \sigma
-\dfrac{1}{\kappa}\int_{\partial B_1} \Big [\left (\nabla \left (\Delta u^{\kappa}\right ) \cdot \nu\right ) -\beta \Delta u^{\kappa}\Big ]\left (x \cdot \nabla u^{\kappa} \right )d \sigma\\[0.1cm]
= & \dfrac{1}{\kappa} \int_{\partial B_1} \lambda \left (\dfrac{\partial^2u^{\kappa}}{\partial r^2} +(N-1)\dfrac{\partial u^{\kappa}}{\partial r}
+\Delta_{\theta} u^{\kappa} \right )\dfrac{\partial u^{\kappa}}{\partial r} \\[0.1cm]
& -\lambda \left [\dfrac{\partial^3 u^{\kappa}}{\partial r^3}+(N-1-\beta)\dfrac{\partial^2 u^{\kappa}}{\partial r^2}
-(N-1)(1+\beta)\dfrac{\partial u^{\kappa}}{\partial r}-(2+\beta) \Delta_{\theta} u^{\kappa}\right ]u^{\kappa} \\[0.1cm]
& +\left [\dfrac{\partial^2 u^{\kappa}}{\partial r^2}+(N-1)\dfrac{\partial u^{\kappa}}{\partial r}+\Delta_{\theta} u^{\kappa} \right ]\left (\dfrac{\partial^2 u^{\kappa}}{\partial r^2}+\dfrac{\partial u^{\kappa}}{\partial r} \right )\\[0.1cm]
&- \left [\dfrac{\partial^3 u^{\kappa}}{\partial r^3}+(N-1-\beta)\dfrac{\partial^2 u^{\kappa}}{\partial r^2}
-(N-1)(1+\beta)\dfrac{\partial u^{\kappa}}{\partial r}-(2+\beta) \Delta_{\theta} u^{\kappa}\right ]\dfrac{\partial u^{\kappa}}{\partial r}  \\[0.15cm]
=& \dfrac{1}{\kappa} \int_{\partial B_1}- \dfrac{\partial^3 u^{\kappa}}{\partial r^3}\dfrac{\partial u^{\kappa}}{\partial r}
-\lambda \dfrac{\partial^3 u^{\kappa}}{\partial r^3}u^{\kappa}+\left (\dfrac{\partial^2 u^{\kappa}}{\partial r^2}\right )^2
+(\lambda+1+\beta)\dfrac{\partial^2u^{\kappa}}{\partial r^2}\dfrac{\partial u^{\kappa}}{\partial r}\\[0.1cm]
& +(N-1)(\lambda+2+\beta)\left (\dfrac{\partial u^{\kappa}}{\partial r}\right )^2
-\lambda (N-1-\beta) \dfrac{\partial^2 u^{\kappa}}{\partial r^2}u^{\kappa}
+\lambda (N-1)(1+\beta) \dfrac{\partial u^{\kappa}}{\partial r}u^{\kappa}\\[0.1cm]
& +\dfrac{1}{\kappa}\int_{\partial B_1} \Delta_{\theta} u^{\kappa}\dfrac{\partial ^2 u^{\kappa}}{\partial r^2} +(\lambda+3+\beta)
\Delta_{\theta} u^{\kappa}\dfrac{\partial u^{\kappa}}{\partial r}+\lambda (2+\beta )\Delta_{\theta}u^{\kappa}u^{\kappa}\\[0.1cm]
:= &\; \mathfrak{T}_1+\mathfrak{T}_2,
\end{align*}where $\Delta_{\theta}$ represents the Laplace-Beltrami operator on $\partial B_1$ and
$\nabla_{\theta}$ is the tangential derivative on $\partial B_1$.\vskip 0.05in

Differentiating $u^{\kappa}$ in $\kappa$ implies
\begin{equation}\label{eq:2.22}
\dfrac{du^{\kappa}}{d\kappa}(x)=\dfrac{1}{\kappa} \left [\lambda u^{\kappa}(x)+r\dfrac{\partial u^{\kappa}}{\partial r}(x)\right ]
\Longrightarrow
r \dfrac{\partial u^{\kappa}}{\partial r} =\kappa \dfrac{du^{\kappa}}{d\kappa}-\lambda u^{\kappa}.
\end{equation}
Differentiating the equation (\ref{eq:2.22}) in $\kappa$ and $r$ respectively yields
\begin{align*}
r \dfrac{\partial}{\partial r}\dfrac{du^{\kappa}}{d\kappa}=\kappa \dfrac{d^2u^{\kappa}}{d\kappa^2}+(1-\lambda)\dfrac{du^{\kappa}}{d\kappa}\;\; \mbox{and}\;\;
\kappa \dfrac{\partial }{\partial r}\dfrac{du^{\kappa}}{d\kappa}=& (1+\lambda)\dfrac{\partial u^{\kappa}}{\partial r}+r\dfrac{\partial^2 u^{\kappa}}{\partial r^2}.
\end{align*}Then, combining the above two equalities with (\ref{eq:2.22}), we obtain that, on $\partial B_1$
\begin{equation}\label{eq:2.23}
\dfrac{\partial^2 u^{\kappa}}{\partial r^2}=r^2 \dfrac{\partial^2 u^{\kappa}}{\partial r^2} =
\kappa^2\dfrac{d^2u^{\kappa}}{d\kappa^2}-2\lambda\kappa \dfrac{du^{\kappa}}{d\kappa}+\lambda (1+\lambda ) u^{\kappa}.
\end{equation}Similarly, we find
\begin{align*}
r^2 \dfrac{\partial^3 u}{\partial r^3}+2r\dfrac{\partial^2 u}{\partial r^2} &=\kappa^2\dfrac{\partial}{\partial r}
\dfrac{d^2u^{\kappa}}{d\kappa^2}-2\lambda \kappa \dfrac{\partial}{\partial r}\dfrac{du^{\kappa}}{d\kappa}+\lambda(1+\lambda)\dfrac{\partial u^{\kappa}}{\partial r},\\[0.1cm]
r\dfrac{\partial }{\partial r}\dfrac{d^2 u^{\kappa}}{d\kappa^2}& =\kappa \dfrac{d^3u^{\kappa}}{d\kappa^3}+(2-\lambda)\dfrac{d^2u^{\kappa}}{d\kappa^2}.
\end{align*}Then, on $\partial B_1$, we have
\begin{align}\label{eq:2.24}
\dfrac{\partial^3 u}{\partial r^3}
=& \kappa^3\dfrac{d^3u^{\kappa}}{d\kappa^3} -3\lambda \kappa^2\dfrac{d^2u^{\kappa}}{d\kappa^2} +3\lambda(1+\lambda)\kappa\dfrac{du^{\kappa}}{d\kappa} -\lambda(1+\lambda)(2+\lambda)u^{\kappa} .
\end{align}\vskip 0.05in

Substituting (\ref{eq:2.22}) and (\ref{eq:2.23}) into the expression of $\mathfrak{T}_2$ arrives at
\begin{align*}
\mathfrak{T}_2
= & \int_{\partial B_1} \kappa \Delta_{\theta}u^{\kappa} \dfrac{d^2 u^{\kappa}}{d \kappa^2}-(\lambda-3-\beta) \Delta_{\theta}u^{\kappa} \dfrac{d u^{\kappa}}{d \kappa} \\[0.1cm]
= & \int_{\partial B_1}-\kappa \nabla_{\theta}u^{\kappa}\nabla_{\theta}\dfrac{d^2u^{\kappa}}{d\kappa^2}+(\lambda-3-\beta)\nabla_{\theta}u^{\kappa}
\nabla_{\theta}\dfrac{du^{\kappa}}{d\kappa} \\[0.1cm]
=& -\dfrac{1}{2}\dfrac{d^2}{d\kappa^2}\left [\kappa \int_{\partial B_1} \left |\nabla_{\theta}u^{\kappa} \right |^2 \right ]
+\dfrac{\lambda-1-\beta}{2}\dfrac{d}{d\kappa} \int_{\partial B_1}\left |\nabla_{\theta}u^{\kappa} \right |^2
+\kappa \int_{\partial B_1}\left |\nabla_{\theta}\dfrac{du^{\kappa}}{d\kappa} \right |^2 \\[0.1cm]
\ge & -\dfrac{1}{2}\dfrac{d^2}{d\kappa^2}\left [\kappa \int_{\partial B_1} \left |\nabla_{\theta}u^{\kappa} \right |^2 \right ]
+\dfrac{\lambda -1-\beta}{2}\dfrac{d}{d\kappa} \int_{\partial B_1}\left |\nabla_{\theta}u^{\kappa} \right |^2.
\end{align*}Let us note the two equalities
\begin{align*}
-\kappa^3\dfrac{du^{\kappa}}{d\kappa}\dfrac{d^3u^{\kappa}}{d\kappa^3}= & \dfrac{d}{d\kappa}\left (-\dfrac{\kappa^3}{2}\dfrac{d}{d\kappa}\left (\dfrac{du^{\kappa}}{d\kappa}\right )^2 \right )+3\kappa^2\dfrac{du^{\kappa}}{d\kappa}\dfrac{d^2u^{\kappa}}{d\kappa^2}+\kappa^3\left (\dfrac{d^2u^{\kappa}}{d\kappa^2}\right )^2,\\[0.1cm]
\kappa u^{\kappa}\dfrac{d^2u^{\kappa}}{d\kappa^2}= & \dfrac{d^2}{d\kappa^2}\left (\dfrac{\kappa \left (u^{\kappa}\right )^2}{2} \right )
-2u^{\kappa}\dfrac{du^{\kappa}}{d\kappa}-\kappa \left (\dfrac{du^{\kappa}}{d\kappa} \right )^2.
\end{align*}Inserting (\ref{eq:2.22})-(\ref{eq:2.24}) into the expression of $\mathfrak{T}_1$, and combining with the above two equalities,
we get
\begin{align*}
\mathfrak{T}_1=& \int_{\partial B_1} -\kappa^3 \dfrac{d^3 u^{\kappa}}{d\kappa^3} \dfrac{du^{\kappa}}{d\kappa}+\kappa^3 \left (\dfrac{d^2u^{\kappa}}{d\kappa^2}\right )^2+(1+\beta)\kappa^2 \dfrac{d^2u^{\kappa}}{d\kappa^2}\dfrac{du^{\kappa}}{d\kappa} \\[0.1cm]
& +\Big [(N-1)(2+\lambda+\beta)-\lambda (5+\lambda+2\beta) \Big ]\kappa \left (\dfrac{du^{\kappa}}{d\kappa}\right )^2 \\[0.1cm]
& + \lambda(2+\lambda-N)\kappa u^{\kappa} \dfrac{d^2u^{\kappa}}{d\kappa^2}
+\lambda(3+\beta)(\lambda+2-N) u^{\kappa}\dfrac{du^{\kappa}}{d\kappa} \\[0.15cm]
=& \int_{\partial B_1}\dfrac{d}{d\kappa} \left (-\dfrac{\kappa^3}{2}\dfrac{d}{d\kappa} \left (\dfrac{du^{\kappa}}{d\kappa} \right )^2 \right )
+2\kappa^3 \left (\dfrac{d^2u^{\kappa}}{d\kappa^2} \right )^2+(4+\beta)\kappa^2 \dfrac{d^2u^{\kappa}}{d\kappa^2}\dfrac{du^{\kappa}}{d\kappa} \\[0.1cm]
& +\Big [(N-1)(2+\beta)+2\lambda(N-4-\beta-\lambda) \Big ]\kappa \left (\dfrac{du^{\kappa}}{d\kappa}\right )^2 \\[0.1cm]
& +\lambda (2+\lambda-N) \dfrac{d^2}{d\kappa^2} \left (\dfrac{\kappa (u^{\kappa})^2}{2}\right )
+\lambda(2+\lambda-N)(1+\beta)u^{\kappa}\dfrac{du^{\kappa}}{d\kappa}
\end{align*}
\begin{align*}
\ge & \int_{\partial B_1}\dfrac{d}{d\kappa} \left (-\dfrac{\kappa^3}{2}\dfrac{d}{d\kappa} \left (\dfrac{du^{\kappa}}{d\kappa} \right )^2 \right )
+\dfrac{\lambda (2+\lambda-N)}{2} \dfrac{d^2}{d\kappa^2} \left (\kappa (u^{\kappa})^2\right ) \\[0.1cm]
& +\dfrac{\lambda}{2}(2+\lambda-N)(1+\beta)\dfrac{d}{d\kappa}(u^{\kappa})^2.
\end{align*}Since $p\ge \dfrac{N+4+2\alpha+\beta}{N-4-\beta}$, the deleted terms of $\mathfrak{T}_1$ satisfies
\begin{align*}
2\kappa^3 \left (\dfrac{d^2u^{\kappa}}{d\kappa^2} \right )^2+ & (4+\beta)\kappa^2 \dfrac{d^2u^{\kappa}}{d\kappa^2}\dfrac{du^{\kappa}}{d\kappa}
+\Big [(N-1)(2+\beta)+2\lambda(N-4-\beta-\lambda) \Big ]\kappa \left (\dfrac{du^{\kappa}}{d\kappa}\right )^2 \\[0.15cm]
&=2\kappa\left [ \kappa \dfrac{d^2u^{\kappa}}{d\kappa^2} +\left (1+\dfrac{\beta}{4}\right ) \dfrac{du^{\kappa}}{d\kappa} \right ]^2
+C(N,p,\alpha,\beta) \kappa \left (\dfrac{du^{\kappa}}{d\kappa}\right )^2\ge 0,
\end{align*}where
\begin{align*}
C(N,p,\alpha,\beta)= & (N-1)(2+\beta)+2\lambda(N-4-\beta-\lambda) -2\left (1+\dfrac{\beta}{4}\right )^2\\
= & (N-2)(2+\beta)+2\lambda(N-4-\beta-\lambda)-\dfrac{\beta^2}{8}> 0.
\end{align*}

Now, we rescale and write those $\kappa$ derivatives in $\mathfrak{T}_1$ and $\mathfrak{T}_2$ as follows.
\begin{align*}
\int_{\partial B_1}\dfrac{d }{d \kappa}\left (u^{\kappa}\right )^2 & =\dfrac{d}{d \kappa} \left (\kappa^{2\lambda+1-N}\int_{\partial B_{\kappa}} u^2 \right ),\\[0.1cm]
\int_{\partial B_1} \dfrac{d^2 }{d \kappa^2} \left [\kappa \left (u^{\kappa}\right )^2 \right ] & =\dfrac{d^2}{d\kappa^2} \left (
\kappa^{2\lambda+2-N}\int_{\partial B_{\kappa}} u^2 \right ),\\[0.1cm]
\int_{\partial B_1} \dfrac{d}{d \kappa}\left [\kappa^3 \dfrac{d }{d \kappa}\left ( \dfrac{d u^{\kappa}}{d \kappa}\right )^2 \right ] &=
\dfrac{d}{d \kappa} \left [\kappa^3 \dfrac{d}{d\kappa} \left ( \kappa^{2\lambda+1-N}\int_{\partial B_{\kappa}}
\left (\lambda \kappa^{-1} u +\dfrac{\partial u}{\partial r} \right )^2\right ) \right ],\\[0.1cm]
\dfrac{d}{d \kappa} \left ( \int_{\partial B_1}\left | \nabla_{\theta}u^{\kappa} \right |^2 \right )
& =\dfrac{d}{d\kappa} \left [\kappa^{ 2\lambda+3-N}\int_{\partial B_{\kappa}}\left (|\nabla u|^2-\left |\dfrac{\partial u}{\partial r} \right |^2
\right ) \right ], \\[0.1cm]
\dfrac{d^2}{d \kappa^2} \left (\kappa \int_{\partial B_1}\left | \nabla_{\theta}u^{\kappa} \right |^2 \right )
& =\dfrac{d^2}{d\kappa^2} \left [\kappa^{ 2\lambda+4-N}\int_{\partial B_{\kappa}}\left (|\nabla u|^2-\left |\dfrac{\partial u}{\partial r} \right |^2
\right ) \right ].
\end{align*}Substituting these terms into $\dfrac{d\mathcal{F}(\kappa)}{d \kappa}$ yields
\begin{align*}
\dfrac{d \mathcal{F}(\kappa)}{d\kappa} \ge & \dfrac{\lambda(2+\lambda-N)(1+\beta)}{2}\dfrac{d}{d \kappa} \left (\kappa^{2\lambda+1-N}\int_{\partial B_{\kappa}} u^2 \right )\\[0.1cm]
&+ \dfrac{\lambda(2+\lambda-N)}{2}\dfrac{d^2}{d\kappa^2} \left (
\kappa^{2\lambda+2-N}\int_{\partial B_{\kappa}} u^2 \right )\\[0.1cm]
&-\dfrac{1}{2}\dfrac{d}{d \kappa} \left [\kappa^3 \dfrac{d}{d\kappa} \left ( \kappa^{2\lambda+1-N}\int_{\partial B_{\kappa}}
\left (\lambda \kappa^{-1} u +\dfrac{\partial u}{\partial r} \right )^2\right ) \right ]\\[0.1cm]
& +\dfrac{\lambda-1-\beta}{2} \dfrac{d}{d\kappa} \left [\kappa^{ 2\lambda+3-N}\int_{\partial B_{\kappa}}\left (|\nabla u|^2-\left |\dfrac{\partial u}{\partial r} \right |^2
\right ) \right ]\\[0.1cm]
& - \dfrac{1}{2}\dfrac{d^2}{d\kappa^2} \left [\kappa^{ 2\lambda+4-N}\int_{\partial B_{\kappa}}\left (|\nabla u|^2-\left |\dfrac{\partial u}{\partial r} \right |^2
\right ) \right ].
\end{align*}Applying the properties of integration, we conclude that $\mathcal{M}(r;x,u)$ is well defined and non-decreasing in $r \in (0,R)$.\vskip .06in

Next, we let $\mathcal{M}(r;0,u)\equiv \mbox{const}$, for all $r\in (0,R)$, then, for any $r_1,r_2 \in (0,R)$ with $r_1<r_2$, we have
\begin{align*}
0 & =\mathcal{M}(r_2;0,u)-\mathcal{M}(r_1;0,u) = \int_{r_1}^{r_2} \dfrac{d}{d \mu}\mathcal{M}(\mu;0,u)d \mu \\
& \ge C(N,p,\alpha,\beta) \int_{B_{r_2}\backslash B_{r_1}} |x|^{2+2\lambda -N}\left
(\lambda \mu^{-1}u+\dfrac{\partial u}{\partial \mu} \right )^2 dx.
\end{align*}Thus, we get
\begin{equation*}
\lambda \mu^{-1}u+\dfrac{\partial u}{\partial \mu}=0,\quad \mbox{a.e.}\quad \mbox{in}\; B_R\backslash \{0\}.
\end{equation*}Integrating in $r$ shows that
\begin{equation*}
u(\mu x)=\mu^{-\frac{4+\alpha+\beta}{p-1}}u(x),\;\ \forall \mu \in (0,1],\; x \in B_R\backslash \{0\}. \eqno \square
\end{equation*}

\begin{remark}\label{eq:r2.2}
From the proof of Theorem \ref{eq:t2.1}, we can find that if the linear combination of Pohozaev identity and the identity (\ref{eq:2.16})
minus some terms of $\mathfrak{T}_1$ and $\mathfrak{T}_2$, then it {\it is equivalent to} the derivative form of the monotonicity formula (\ref{eq:2.14}).
\end{remark}

\vskip .2in

\section{Proof of Theorem \ref{eq:t1.1}}

\vskip .1in

First, we give the expression of $N_{\alpha,\beta}(p)$. Now, we define four functions by
\begin{eqnarray*}
& \mathfrak{f}(N):=p\dfrac{4+\alpha+\beta}{p-1}\left (\dfrac{4+\alpha+\beta p}{p-1}+2 \right ) \left (N-2-\dfrac{4+\alpha+\beta}{p-1} \right )
\left (N-4-\dfrac{4+\alpha+\beta p}{p-1} \right ), & \\[0.15cm]
& \mathfrak{g}(N):=p\left (\dfrac{4+\alpha+\beta p}{p-1}+2 \right ) \left (N-4- \dfrac{4+\alpha+\beta p}{p-1}\right )+p\dfrac{4+\alpha+\beta}{p-1}
\left (N-2-\dfrac{4+\alpha+\beta}{p-1} \right ), & \\[0.15cm]
& \mathfrak{F}(N):  =\dfrac{(N+\beta)^2(N-4-\beta)^2}{16}, & \\[0.15cm]
& \mathfrak{G}(N):  =\dfrac{(N+\beta)(N-4-\beta)}{2}.&
\end{eqnarray*}Differentiating the functions $\mathfrak{f}(N)$ and $\mathfrak{F}(N)$ in $N$, we obtain
\begin{eqnarray*}
& \mathfrak{f}'(N)=p\dfrac{4+\alpha+\beta}{p-1}\left (2+\dfrac{4+\alpha+\beta p}{p-1} \right )\left (2N-6-\beta-\dfrac{8+2\alpha+2\beta}{p-1} \right ),& \\[0.15cm]
& \mathfrak{F}'(N)=\dfrac{1}{4}(N+\beta)(N-2)(N-4-\beta).&
\end{eqnarray*}A simple computation yields
\begin{align*}
\mathfrak{f} & (4+\beta+2\lambda)-\mathfrak{F}(4+\beta+2\lambda)=(p-1) \lambda^2 (2+\beta+\lambda)^2 >0,\\[0.15cm]
\mathfrak{f} & (4+\beta+(4p+1)\lambda)-\mathfrak{F}(4+\beta+(4p+1)\lambda) \\[0.1cm]
& =4p^2\lambda^2(2+\beta+\lambda)(2+\beta+4p\lambda)
-\dfrac{(4p+1)^2\lambda^2}{16}\Big [(4+2\beta)+(4p+1)\lambda \Big ]^2\\[0.1cm]
&  <0,\\[0.15cm]
\mathfrak{g} & (4+\beta+2\lambda)-\mathfrak{G}(4+\beta+2\lambda)=2(p-1)\lambda (2+\beta+\lambda) >0,\\[0.15cm]
\mathfrak{f} & ' (4+\beta+2\lambda)-\mathfrak{F}'(4+\beta+2\lambda) =(p-1)\lambda (2+\beta+\lambda)(2+\beta+2\lambda)
>0.
\end{align*}Therefore, we take the least real root $N(p,\alpha,\beta)$ of the following algebra equation between $4+\beta+2\lambda$ with $4+\beta+(4p+1)\lambda$
\begin{align*}
& (p^4-4p^3+6p^2-4p+1)y^4-(8p^4-32p^3+48p^2-32p+8)y^3\\[0.1cm]
& -(p^2-2p+1)\Big [(32\alpha +104\beta +16 \alpha\beta +18\beta^2+112) p^2+(16\alpha \beta  \\[0.1cm]
& +16 \alpha^2 -4\beta^2 +16\beta +96\alpha +160) p
+8\beta +2\beta^2 -16 \Big ]y^2\\[0.1cm]
& +\Big \{ \Big [\Big (48+44\beta +12 \beta^2 +8\alpha \beta +12\alpha +\alpha \beta^2+\beta^3 \Big ) p^2+\Big (64 +56 \alpha +28 \alpha\beta  \\[0.1cm]
& +10\alpha^2 +40\beta +10\beta^2  +4\alpha\beta^2 +3\alpha^2\beta +\beta^3\Big ) p +28\alpha+16+14\alpha^2 +2\alpha^3 \\[0.1cm]
& +12\beta+12\alpha\beta  +3\alpha^2\beta +2\beta^2+\alpha\beta^2 \Big ]16 (p^2-p)-(p-1)^4(32 \beta+8\beta^2)\Big \} y\\[0.1cm]
&   +(p-1)^4\beta^2(\beta+4)^2-16\Big [(8+(2+\beta)\alpha+(6+\beta)\beta)p +(6+\alpha+\beta)\alpha+2\beta+8 \Big ] \\[0.1cm]
& \times \Big [(8+2\beta) p^2+(6\beta +6\alpha +\alpha\beta +\beta^2+8) p+2\alpha +\alpha^2+\alpha \beta \Big ] p\\[0.1cm]
& =0.
\end{align*}

Define
\begin{equation}\label{eq:3.1}
N_{\alpha,\beta}(p):=N(p,\alpha,\beta).
\end{equation}Then, for any $4+\beta+\dfrac{8+2\alpha+2\beta}{p-1}<N<N_{\alpha,\beta}(p)$, we find
\begin{equation}\label{eq:3.2}
\mathfrak{f}(N) >\dfrac{(N+\beta)^2(N-4-\beta)^2}{16}.
\end{equation}Furthermore, combining the above inequality with the inequality $a+b\ge 2\sqrt{ab}$, for all $a,b\ge 0$, we have
\begin{equation}\label{eq:3.3}
\mathfrak{g}(N) >\dfrac{(N+\beta)(N-4-\beta)}{2}.
\end{equation}On the other hand, we easily check that the equality $\mathfrak{f}(N)-\mathfrak{F}(N)>0$ holds, if one of the following conditions  holds:
\begin{itemize}
\item [\rm (i)] $\alpha=\beta$ and $4+\alpha+\dfrac{8+4\alpha}{p-1}<N< 8+3\alpha+\dfrac{8+4\alpha}{p-1}$; or
\item [\rm (ii)] $\alpha=\beta=0$ and
$4+\dfrac{8}{p-1}<N<2+\dfrac{4(p+1)}{p-1}\left (\sqrt{\dfrac{2p}{p+1}}+\sqrt{\dfrac{2p}{p+1}-\sqrt{\dfrac{2p}{p+1}}}\right )$.
\end{itemize}
\vskip .15in

Let us recall that if we take
\begin{equation*}
\Gamma=\dfrac{4+\alpha+\beta}{p-1}\left (\dfrac{4+\alpha+\beta p}{p-1}+2 \right )\left (N-2-\dfrac{4+\alpha+\beta}{p-1} \right )
\left (N-4-\dfrac{4+\alpha+\beta p}{p-1} \right ),
\end{equation*}then
\begin{equation*}
u_{\Gamma}(r)=\Gamma^{\frac{1}{p-1}}r^{-\frac{4+\alpha+\beta}{p-1}}
\end{equation*}is a singular solution of (\ref{eq:1.1}) in $\mathbb{R}^N\backslash \{0\}$.
By the well-known weighted Hardy-Rellich inequality (\cite{Ghoussoub}) with the best constant
\begin{equation*}
\int_{\mathbb{R}^N} \dfrac{|\Delta \psi|^2}{|x|^{\beta}}dx \ge \frac{(N+\beta)^2(N-4-\beta)^2}{16}
\int_{\mathbb{R}^N} \dfrac{\psi^2}{|x|^{4+\beta}} dx,\quad \forall \psi \in H^2_{loc}(\mathbb{R}^N),
\end{equation*}we conclude that the singular solution $u_{\Gamma}$ is stable in $\mathbb{R}^N\backslash \{0\}$ if and only if
\begin{equation*}
\mathfrak{f}(N)=p\Gamma \le \frac{(N+\beta)^2(N-4-\beta)^2}{16}=\mathfrak{F}(N).
\end{equation*}Here $-1-\dfrac{\sqrt{1+(N-1)^2}}{2} \le \beta \le \dfrac{N-4}{2}$.\vskip .15in

\noindent {\bf Proof of Theorem \ref{eq:t1.1}.}
Since $u \in W^{2,2}(B_2 \backslash B_1)$, $|x|^{\alpha}|u|^{p+1} \in L_{loc}^1(\mathbb{R}^N
\backslash \{0\})$ and $|x|^{-\beta}|\Delta u|^2 \in L_{loc}^1(\mathbb{R}^N\backslash \{0\})$, we can assume that there exists a $\Psi \in W^{2,2}(\mathbb{S}^{N-1}) \cap L^{p+1}(\mathbb{S}^{N-1})$, such that
in polar coordinates
\begin{equation*}
u(r,\theta)=r^{-\frac{4+\alpha+\beta}{p-1}}\Psi (\theta).
\end{equation*}Substituting into (\ref{eq:1.1}) to get
\begin{equation*}
\Delta_{\theta}^2 \Psi-\Upsilon\Delta_{\theta} \Psi+\Gamma \Psi=|\Psi|^{p-1}\Psi,
\end{equation*}where
\begin{eqnarray*}
& \Upsilon =\lambda
(N-2-\lambda)+ \left (\dfrac{4+\alpha+\beta p}{p-1}+2 \right )\left (N-4-\dfrac{4+\alpha+\beta p}{p-1}\right ),&\\[0.15cm]
& \Gamma =\lambda (N-2-\lambda)\left (\dfrac{4+\alpha+\beta p}{p-1}+2 \right )
\left (N-4-\dfrac{4+\alpha+\beta p}{p-1} \right ).&
\end{eqnarray*}Multiplying the above equation by $\Psi$ and integration by parts yields
\begin{equation}\label{eq:3.4}
\int_{\mathbb{S}^{N-1}} |\Delta_{\theta} \Psi|^2+\Upsilon |\nabla_{\theta}\Psi|^2 +\Gamma \Psi^2=\int_{\mathbb{S}^{N-1}}|\Psi|^{p+1}.
\end{equation}

Since $u$ is a stable solution, we can take a test function $r^{-\frac{N-4-\beta}{2}}\Psi(\theta)\xi_{\varepsilon}(r)$ and obtain
\begin{equation}\label{eq:3.5}
p\int_{\mathbb{R}^N} |x|^{\alpha}|u|^{p-1}\left (r^{-\frac{N-4-\beta}{2}} \Psi(\theta)\xi_{\varepsilon}(r) \right )^2 dx
\le \int_{\mathbb{R}^N} \dfrac{\left |\Delta \left (r^{-\frac{N-4-\beta}{2}} \Psi(\theta)\xi_{\varepsilon}(r) \right )\right |^2}{|x|^{\beta}}dx.
\end{equation}Here, for any $\varepsilon >0$, we choose $\xi_{\varepsilon} \in C_0^2\left (\left (\frac{\varepsilon}{3},\frac{3}{\varepsilon}\right ) \right )$ such that
$\xi_{\varepsilon} \equiv 1$ in $\left (\varepsilon, \frac{1}{\varepsilon}\right )$ and
\begin{equation*}
r|\xi_{\varepsilon}'(r)|+r^2|\xi_{\varepsilon}''(r)| \le C,
\end{equation*}for all $r>0$. Then one can easily deduce that
\begin{equation*}
\int_0^{\infty}r^{-1} \xi_{\varepsilon}^2(r)dr \ge \int_{\varepsilon}^{\frac{1}{\varepsilon}}r^{-1}dr =2|\ln \varepsilon|,
\end{equation*}and
\begin{equation*}
\int_0^{\infty}\Big [r|\xi_{\varepsilon}'(r)|^2+r^3|\xi_{\varepsilon}''(r)|^2+|\xi_{\varepsilon}'(r)\xi_{\varepsilon}(r)|+r|\xi_{\varepsilon}(r)
\xi_{\varepsilon}''(r)| \Big ]dr \le C.
\end{equation*}Applying the coordinate transformation to the left hand side of (\ref{eq:3.5}), we get
\begin{align}\label{eq:3.6}
p\int_0^{+\infty} \int_{\mathbb{S}^{N-1}} r^{\alpha}|u|^{p-1}\left (r^{-\frac{N-4-\beta}{2}} \Psi(\theta)\xi_{\varepsilon}(r) \right )^2r^{N-1}dr d\theta \nonumber \\[0.1cm]
=p \left (\int_{\mathbb{S}^{N-1}} |\Psi|^{p+1} d \theta \right )\left (\int_0^{+\infty} r^{-1}\xi_{\varepsilon}^2(r) dr\right ).
\end{align}A direct calculation finds
\begin{align*}
\Delta \left (r^{-\frac{N-4-\beta}{2}} \Psi(\theta)\xi_{\varepsilon}(r) \right )= & -\dfrac{(N+\beta)(N-4-\beta)}{4}r^{-\frac{N-\beta}{2}}\xi_{\varepsilon}(r)\Psi(\theta)
+r^{-\frac{N-\beta}{2}}\xi_{\varepsilon}(r)\Delta_{\theta} \Psi \\[0.1cm]
& +(3+\beta)r^{-\frac{N-2-\beta}{2}}\xi'_{\varepsilon}(r)\Psi(\theta)+r^{-\frac{N-4-\beta}{2}}\xi''_{\varepsilon}(r)\Psi(\theta),
\end{align*}and inserting into the right hand side of (\ref{eq:3.5}) yields
\begin{align}\label{eq:3.7}
\int_{\mathbb{R}^N}& |x|^{-\beta} \left |\Delta \left (r^{-\frac{N-4-\beta}{2}} \Psi(\theta)\xi_{\varepsilon}(r) \right )\right |^2 dx \nonumber \\[0.13cm]
\le & \left [\int_{\mathbb{S}^{N-1}} \left (|\Delta_{\theta} \Psi|^2+\dfrac{(N+\beta)(N-4-\beta)}{2}|\nabla_{\theta} \Psi|^2+\dfrac{(N+\beta)^2(N-4-\beta)^2}{16}\Psi^2 \right ) d\theta \right ]\nonumber \\[0.13cm]
& \times \left (\int_0^{+\infty} r^{-1}\xi_{\varepsilon}^2(r) dr \right ) \nonumber \\[0.13cm]
& +O \left \{\int_0^{+\infty} \left [r|\xi_{\varepsilon}'(r)|^2+r^3|\xi_{\varepsilon}''(r)|^2+|\xi_{\varepsilon}'(r)|\xi_{\varepsilon}(r)+r\xi_{\varepsilon}(r)|\xi_{\varepsilon}''(r)| \right ] dr \right \}\nonumber \\[0.13cm]
& \times \int_{\mathbb{S}^{N-1}} \left [\Psi(\theta)^2+|\nabla_{\theta}\Psi(\theta)|^2 \right ] d\theta.
\end{align}Put (\ref{eq:3.6}) and (\ref{eq:3.7}) back into (\ref{eq:3.5}), take $\varepsilon \to 0$,
and pass to the limit to obtain
\begin{align*}
& p \int_{\mathbb{S}^{N-1}} |\Psi|^{p+1} d\theta \\[0.1cm]
&\le
\int_{\mathbb{S}^{N-1}} \left [|\Delta_{\theta} \Psi|^2+\dfrac{(N+\beta)(N-4-\beta)}{2}|\nabla_{\theta} \Psi|^2+\dfrac{(N+\beta)^2(N-4-\beta)^2}{16}\Psi^2 \right ] d\theta.
\end{align*}\vskip .05in

Now, combining the above inequality with (\ref{eq:3.4}), we have
\begin{align*}
\int_{\mathbb{S}^{N-1}} (p-1) & |\Delta_{\theta} \Psi|^2+\left (p\Upsilon-\dfrac{(N+\beta)(N-4-\beta)}{2} \right )|\nabla_{\theta} \Psi |^2 \\[0.1cm]
&+\left (p\Gamma-\dfrac{(N+\beta)^2(N-4-\beta)^2}{16}\right )\Psi^2 \le 0.
\end{align*}Since $4+\beta+\dfrac{8+2\alpha+2\beta}{p-1}<N<N_{\alpha,\beta}(p)$, it implies from the definition of $N_{\alpha,\beta}(p)$, (\ref{eq:3.2}) and (\ref{eq:3.3}) that
\begin{equation*}
\Psi(\theta)\equiv 0.
\end{equation*}Therefore, we get
\begin{equation*}
u \equiv 0. \eqno \square
\end{equation*}\vskip .2in

\section{Proof of Theorem \ref{eq:t1.2}}

\vskip .1in

\noindent {\bf Proof of Theorem \ref{eq:t1.2}.} We divide the proof into three cases.\vskip 0.1in

\noindent {\bf Case I.} $5 \le N < 4+\beta+\dfrac{8+2\alpha+2\beta}{p-1}$.\vskip 0.1in

Since $N<4+\beta+\dfrac{8+2\alpha+2\beta}{p-1}$, it implies from (\ref{eq:2.4}) that as $R \to +\infty$,
\begin{equation*}
\int_{B_R(x)} \left [\dfrac{|\Delta u|^2}{|z|^{\beta}} +|z|^{\alpha} |u|^{p+1} \right ]dz \le C R^{N-4-\beta-\frac{8+2\alpha+2\beta}{p-1}} \to
0.
\end{equation*}Therefore, we get
\begin{equation*}
u \equiv 0.
\end{equation*}

\noindent {\bf Case II.} $N=4+\beta+\dfrac{8+2\alpha+2\beta}{p-1}$.\vskip 0.1in

From the inequality (\ref{eq:2.4}), we obtain that
\begin{equation*}
\int_{\mathbb{R}^N} \left [\dfrac{|\Delta u|^2}{|z|^{\beta}} +|z|^{\alpha} |u|^{p+1} \right ]dz <+\infty,
\end{equation*}implies
\begin{equation*}
\lim\limits_{R\to +\infty} \int_{\mathfrak{D}} \left [\dfrac{|\Delta u|^2}{|z|^{\beta}}+|z|^{\alpha}
|u|^{p+1} \right ]dz=0,
\end{equation*}where $\mathfrak{D}:=B_{2R}(x) \backslash B_R(x)$. Applying (\ref{eq:2.3}) and H\"{o}lder's inequality yields
\begin{align*}
\int_{B_R(x)} & \left [\dfrac{|\Delta u|^2}{|z|^{\beta}}+|z|^{\alpha} |u|^{p+1} \right ] dz
\le  CR^{-2} \int_{\mathfrak{D}}\dfrac{|u\Delta u|}{|z|^{\beta}} dz +CR^{-4}\int_{\mathfrak{D}}\dfrac{u^2}{|z|^{\beta}} dz\\[0.1cm]
& \le C\mathfrak{C}R^{-2}\left (\int_{\mathfrak{D}} |z|^{\alpha}|u|^{p+1} dz\right )^{\frac{1}{p+1}}
\left (\int_{\mathfrak{D}} |z|^{-\frac{2\alpha+\beta (p+1)}{p-1}} dz\right )^{\frac{p-1}{2(p+1)}} \\[0.1cm]
&\;\;\ +CR^{-4} \left (\int_{\mathfrak{D}}|z|^{\alpha}|u|^{p+1} dz\right )^{\frac{2}{p+1}}\left (
\int_{\mathfrak{D}} |z|^{-\frac{2\alpha+\beta (p+1)}{p-1}} dz \right )^{\frac{p-1}{p+1}} \\[0.1cm]
& \le C\mathfrak{C}R^{\left [N-4-\beta-\frac{8+2\alpha+2\beta}{p-1} \right ]\frac{p-1}{2(p+1)}} \left (\int_{\mathfrak{D}}
|z|^{\alpha}|u|^{p+1} dz\right )^{\frac{p-1}{2(p+1)}} \\[0.1cm]
& \;\;\ +CR^{\left [N-4-\beta-\frac{8+2\alpha+2\beta}{p-1} \right ]\frac{p-1}{p+1}}\left (
\int_{\mathfrak{D}} |z|^{\alpha}|u|^{p+1} dz\right )^{\frac{p-1}{p+1}},
\end{align*}where $\mathfrak{C}=\left (\displaystyle \int_{\mathfrak{D}} \dfrac{|\Delta u|^2}{|z|^{\beta}} dz \right )^{\frac{1}{2}}$.
From $N=4+\beta+\dfrac{8+2\alpha+2\beta}{p-1}$, it implies that the right hand side of the above inequality converges to $0$ as $R \to +\infty$. Therefore, we obtain
\begin{equation*}
u \equiv 0.
\end{equation*}

\noindent {\bf Case III.} $4+\beta+\dfrac{8+2\alpha+2\beta}{p-1}<N<N_{\alpha,\beta}(p)$.\vskip 0.1in

First, we will obtain some properties of the
function $\mathcal{M}$.

\begin{lemma}\label{eq:l4.1}
$\lim\limits_{r \to +\infty} \mathcal{M}(r;0,u) <+\infty$.
\end{lemma}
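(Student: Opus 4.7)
Since Theorem \ref{eq:t2.1} establishes that $\mathcal{M}(\,\cdot\,;0,u)$ is nondecreasing in $r$, the limit exists in $(-\infty,+\infty]$. I would prove finiteness by exhibiting a sequence $r_k\to+\infty$ along which $\mathcal{M}(r_k;0,u)$ stays uniformly bounded above; monotonicity then delivers the conclusion.

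The volume piece $r^\delta\int_{B_r}\bigl[\tfrac12|z|^{-\beta}|\Delta u|^2-\tfrac{1}{p+1}|z|^\alpha|u|^{p+1}\bigr]$ is controlled directly by inequality (\ref{eq:2.4}) of Lemma \ref{eq:al2.3}: since $\delta+N-4-\beta-2\lambda=0$, the $R$-dependent exponents cancel exactly and this piece is bounded by a constant independent of $r$. For the non-derivative surface terms on $\partial B_r$, I would use a shell-averaging argument. Fubini turns $\int_R^{2R}\!\int_{\partial B_r}u^2\,dS\,dr$ into $\int_{B_{2R}\setminus B_R}u^2\,dz$, and a weighted H\"older inequality against (\ref{eq:2.4}) gives $\int_{B_{2R}\setminus B_R}u^2\,dz\le CR^{N-2\lambda}$ (exactly as in Case II of Theorem \ref{eq:t1.2}, extended to the larger range of $N$). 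Analogous shell bounds for $|\nabla u|^2$ and $|z|^{-\beta}|\Delta u|^2$ follow from (\ref{eq:2.3})--(\ref{eq:2.4}) combined with integration by parts. The mean value theorem produces a radius $r_R\in(R,2R)$ at which each surface integral matches its corresponding $r^{2\lambda+c-N}$ prefactor in $\mathcal{M}$, yielding an $O(1)$ contribution.

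The delicate piece is the derivative-type terms $\tfrac{d}{dr}\bigl[r^a\int_{\partial B_r}\Phi\bigr]$ in the definition of $\mathcal{M}$. Expanding in polar coordinates gives $(a+N-1)r^{a-1}\int_{\partial B_r}\Phi+r^a\int_{\partial B_r}\partial_r\Phi$; for $\Phi=u^2$ this reduces to terms already handled together with $\int_{\partial B_r}u\,\partial_r u$, controlled via Cauchy--Schwarz and the previous step. For $\Phi=(\lambda r^{-1}u+\partial_r u)^2$ and $\Phi=|\nabla u|^2-|\partial_r u|^2$, however, the expansion introduces $\partial_r^2 u$ and mixed second derivatives that do not come directly from Lemma \ref{eq:al2.3}. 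To handle these I would rescale $u_R(y):=R^\lambda u(Ry)$, observe that (\ref{eq:2.4}) gives uniform $L^2$-type bounds for $u_R$ on the fixed annulus $B_4\setminus B_{1/2}$, and apply standard interior $W^{4,2}$ regularity to the rescaled system (\ref{eq:1.1}) on this annulus (the weights $|y|^\alpha,|y|^\beta$ are smooth and bounded there); pulling back gives $L^2$-shell bounds on $B_{2R}\setminus B_R$ for all derivatives of $u$ up to order three with the correct scaling $R^{N-2\lambda-2k}$. A final application of the mean value theorem selects $r_R\in(R,2R)$ at which every derivative-type surface integral is also $O(1)$. The main obstacle is exactly this bootstrap step, controlling higher derivatives of $u$ on spherical shells via rescaling and interior regularity; once this is in hand, combining the three estimates yields $\mathcal{M}(r_R;0,u)\le C$ uniformly in $R$, and the monotonicity from Theorem \ref{eq:t2.1} concludes the proof.
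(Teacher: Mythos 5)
The overall skeleton you propose — bound the volume term by $(\ref{eq:2.4})$, handle the surface terms by shell averaging, and let the monotonicity from Theorem \ref{eq:t2.1} convert a bound along a sequence of radii into a bound on the limit — is exactly the structure of the paper's argument, and your treatment of the volume piece and the pure $\int_{\partial B_r}u^2$ and $\int_{\partial B_r}|\nabla u|^2$ pieces (Fubini plus weighted H\"older against $(\ref{eq:2.4})$, and the interpolation bound $(\ref{eq:4.1})$ for $|\nabla u|^2$) coincides with the paper's.

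Where you diverge is precisely the step you yourself flag as the obstacle: the $\tfrac{d}{dr}$-type terms in $\mathcal{M}$. You expand $\tfrac{d}{dr}\bigl[r^a\int_{\partial B_r}\Phi\bigr]$ in polar coordinates, which brings in $\partial_r\Phi$, hence $\partial_r^2 u$ and mixed second derivatives for $\Phi=(\lambda r^{-1}u+\partial_r u)^2$ and $\Phi=|\nabla u|^2-|\partial_r u|^2$, and then propose to control these by rescaling to a fixed annulus and invoking interior $W^{4,2}$ elliptic regularity. This is not what the paper does, and it has a genuine gap: for the scalar equation $\Delta\bigl(|x|^{-\beta}\Delta u\bigr)=|x|^{\alpha}|u|^{p-1}u$, the a priori information $(\ref{eq:2.4})$ only places the rescaled $u_R$ in $W^{2,2}\cap L^{p+1}$ on the annulus; the right-hand side $|u_R|^{p-1}u_R$ is therefore in $L^{(p+1)/p}$, not $L^2$, so ``standard interior $W^{4,2}$ regularity'' does not directly apply. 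For $p>1$ in the supercritical range $N>4+\beta+\tfrac{8+2\alpha+2\beta}{p-1}$ the obvious Sobolev bootstrap does not close in one step (one needs $N\le 4p/(p-1)$, which fails here), and whether a finite iteration would close requires a separate, nontrivial argument which you do not supply.

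The paper bypasses all of this with an elementary integration-by-parts / fundamental-theorem-of-calculus trick: instead of differentiating the surface integrals, it bounds each $\tfrac{d}{dr}[\cdot]$ term through a double shell average of the form
\begin{equation*}
\frac{1}{r^2}\int_r^{2r}\!\!\int_\iota^{\iota+r}\frac{d}{d\mu}\Bigl[\,\cdot\,\Bigr]\,d\mu\,d\iota ,
\end{equation*}
which, after integrating by parts in $\mu$ where a power of $\mu$ multiplies the derivative, reduces entirely to boundary evaluations and volume integrals of the \emph{undifferentiated} quantities $u$, $\nabla u$. These are then controlled by $(\ref{eq:2.4})$ and $(\ref{eq:4.1})$ alone; no $\partial_r^2 u$, no third derivatives, and no regularity bootstrap ever appear. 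That is the idea missing from your proposal, and it is what makes the proof of Lemma \ref{eq:l4.1} work with only $W^{2,2}$-level information on $u$. Your plan would also need care about choosing a single good radius $r_R$ that works for \emph{all} terms simultaneously (apply the mean value theorem once to the sum, not termwise), but that is a minor point compared to the derivative-bootstrap issue.
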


\begin{proof}
The proof mainly use the estimate (\ref{eq:2.4}) and the monotonicity of the function $\mathcal{M}(r;0,u)$ in $r$.

Applying (\ref{eq:2.4}) to estimate the first term in the right hand side of (\ref{eq:2.14}) yields
\begin{align*}
&  r^{\frac{8+2\alpha+2\beta}{p-1}+4+\beta-N} \int_{B_r} \left [ \dfrac{1}{2}
\dfrac{(\Delta u)^2}{|x|^{\beta}}-\dfrac{1}{p+1}|x|^{\alpha} |u|^{p+1} \right ]dx \\[0.1cm]
& \le C r^{\frac{8+2\alpha+2\beta}{p-1}+4+\beta-N} r^{N-4-\beta-\frac{8+2\alpha+2\beta}{p-1}} \\
& \le C.
\end{align*}Utilize H\"{o}lder's inequality to estimate the second term in the right hand side of (\ref{eq:2.14})
\begin{align*}
& r^{\frac{8+2\alpha+2\beta}{p-1}+1-N} \int_{\partial B_r} u^2 \le \dfrac{1}{r} \int_r^{2r} \left (\mu^{\frac{8+2\alpha+2\beta}{p-1}+1-N} \int_{\partial B_{\mu}} u^2 dS \right )d \mu  \\[0.1cm]
& \le \dfrac{1}{r} \left ( \int_{B_{2r}\backslash B_r} \left ( |x|^{\frac{8+2\alpha+2\beta}{p-1}+1-N-\frac{2\alpha}{p+1}} \right
)^{\frac{p+1}{p-1}} \right )^{\frac{p-1}{p+1}} \left ( \int_{B_{3r}} |x|^{\alpha} |u|^{p+1} \right )^{\frac{2}{p+1}} \\[0.1cm]
& \le C r^{\left [\frac{8+2\alpha+2\beta}{p-1}-N-\frac{2\alpha}{p+1}+N\frac{p-1}{p+1}\right ]} r^{\frac{2}{p+1}\left
[N-4-\beta-\frac{8+2\alpha+2\beta}{p-1} \right ]}\\[0.1cm]
& \le C.
\end{align*}Similarly, we find
\begin{align*}
\dfrac{d}{dr} \left (r^{2\lambda+2-N}
\int_{\partial B_r} u^2 \right ) & \le \dfrac{1}{r^2} \int^{2r}_r \int_{\iota}^{\iota+r} \dfrac{d}{d \mu} \left (\mu^{2\lambda+2-N}
\int_{\partial B_{\mu}} u^2 \right )d \mu d\iota \\[0.1cm]
& \le C.
\end{align*}By the interpolation inequality and H\"{o}lder's inequality, we get
\begin{align}\label{eq:4.1}
\int_{B_r} |\nabla u|^2 \le & Cr^2 \int_{B_r} |\Delta u|^2 +Cr^{-2} \int_{B_r} u^2 \nonumber \\[0.1cm]
\le & Cr^2 \int_{B_r} |x|^{\beta}\dfrac{|\Delta u|^2}{|x|^{\beta}} +Cr^{-2} \left (
\int_{B_r} |x|^{\alpha}|u|^{p+1} \right )^{\frac{2}{p+1}} \left (\int_{B_r} |x|^{-\frac{2\alpha}{p-1}} dx \right )^{\frac{p-1}{p+1}} \nonumber \\[0.1cm]
\le & Cr^{N-2-\frac{8+2\alpha+2\beta}{p-1}}.
\end{align}Then, it implies that
\begin{align*}
r^{\frac{8+2\alpha+2\beta}{p-1}+3-N} \int_{\partial B_r}
|\nabla u|^2 dS \le \dfrac{1}{r} \int_r^{2r} \left (\mu^{\frac{8+2\alpha+2\beta}{p-1}+3-N} \int_{\partial B_{\mu}} |\nabla u|^2 dS \right )d \mu \le C.
\end{align*}Therefore, we get the boundedness of the fifth and sixth terms in the right hand side of (\ref{eq:2.14}). Utilizing
H\"{o}lder's inequality and (\ref{eq:4.1}), we find
\begin{align*}
\dfrac{1}{r^2} & \int_r^{2r} \int_{\iota}^{\iota+r} \dfrac{\mu^3}{2} \dfrac{d}{d \mu} \left [
\mu^{2\lambda +1-N} \int_{\partial B_{\mu}} \Big (\lambda \mu^{-1}u+\dfrac{\partial u}{\partial r}
\Big )^2 \right ] d \mu d\iota \\[0.13cm]
= & \dfrac{1}{2r^2}\int_r^{2r} \left \{(\iota+r)^{2\lambda +4-N} \int_{\partial B_{\iota+r}}
\Big [\lambda (\iota+r)^{-1}u+\dfrac{\partial u}{\partial r} \Big ]^2-
\iota^{2\lambda+4-N} \int_{\partial B_r}
\Big [\lambda \iota^{-1}u+\dfrac{\partial u}{\partial r} \Big ]^2\right \} \\[0.13cm]
& -\dfrac{3}{2r^2} \int_r^{2r} \int_{\iota}^{\iota+r} \mu^{2\lambda+3-N} \int_{\partial B_{\mu}}\left (\lambda
\mu^{-1} u+\dfrac{\partial u}{\partial r} \right )^2  \\[0.13cm]
\le & \dfrac{C}{r^2} \int_{B_{3r}\backslash B_r} |x|^{2\lambda +2-N} \left (u^2 +
|x|^2 \left (\dfrac{\partial u}{\partial r} \right )^2 \right )dx \\[0.1cm]
\le & C.
\end{align*}Consequently, we obtain the desired result.
\end{proof}

\begin{lemma}\label{eq:l4.2} For all $\kappa>0$, define
{\it blowing down} sequences
\begin{equation*}
u^{\kappa}(x):=\kappa^{\frac{4+\alpha+\beta}{p-1}}u(\kappa x),
\end{equation*}then $u^{\kappa}$ strongly converges to $u^{\infty}$ in $W_{loc}^{1,2}
(\mathbb{R}^N) \cap L_{loc}^{p+1}(\mathbb{R}^N)$. Furthermore, $u^{\infty}$ is a homogeneous stable solution of (\ref{eq:1.1}). 
\end{lemma}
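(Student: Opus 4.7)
The plan is to establish Lemma \ref{eq:l4.2} in four stages: check scaling invariance, derive uniform local bounds, extract a strongly convergent subsequence, and then identify the limit as a homogeneous stable solution via the monotonicity formula.

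First, a direct computation using the fact that $\lambda=(4+\alpha+\beta)/(p-1)$ is exactly the critical scaling exponent shows that $u^{\kappa}$ solves (\ref{eq:1.1}) in the weak sense whenever $u$ does. Moreover, for any fixed $R>0$, a change of variables $y=\kappa x$ gives
\[
\int_{B_R}\!\Bigl[\frac{|\Delta u^{\kappa}|^2}{|x|^{\beta}}+|x|^{\alpha}|u^{\kappa}|^{p+1}\Bigr]dx = \kappa^{\,2\lambda+4+\beta-N}\!\!\int_{B_{\kappa R}}\!\Bigl[\frac{|\Delta u|^2}{|y|^{\beta}}+|y|^{\alpha}|u|^{p+1}\Bigr]dy,
\]
and since $2\lambda=(8+2\alpha+2\beta)/(p-1)$, the $\kappa$-exponent exactly cancels the one produced by the estimate (\ref{eq:2.4}) applied to $u$ on $B_{\kappa R}$. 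Hence $\{u^{\kappa}\}$ is uniformly bounded in the weighted $W^{2,2}_{loc}$ and weighted $L^{p+1}_{loc}$ norms, independently of $\kappa \ge 1$. The interpolation bound (\ref{eq:4.1}), rescaled in the same way, also yields a uniform bound on $\int_{B_R} |\nabla u^{\kappa}|^2 \, dx$.

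Next, I extract a subsequence $u^{\kappa_n}\rightharpoonup u^{\infty}$ weakly in $W^{2,2}_{loc}(\mathbb{R}^N\setminus\{0\})$ and weakly in $L^{p+1}_{loc}(\mathbb{R}^N,|x|^{\alpha}dx)$. By Rellich-Kondrachov compactness, convergence is strong in $W^{1,2}_{loc}$. Strong $L^{p+1}_{loc}$ convergence will follow from combining the $W^{2,2}$ compactness with a truncation argument: away from the origin the weight is harmless and Sobolev embedding applies, while near the origin the uniform bound on $\int |x|^{\alpha}|u^{\kappa}|^{p+1}$ together with absolute continuity of the integral controls the tails. Passing to the limit in the weak formulation of (\ref{eq:1.1}) and in the stability inequality of the Definition (using Fatou's lemma on the left-hand side and strong convergence on the right) shows that $u^{\infty}$ is a stable weak solution.

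Finally, for homogeneity, the key observation is the scaling identity $\mathcal{M}(r;0,u^{\kappa})=\mathcal{M}(\kappa r;0,u)$, which follows by inserting the definition of $u^{\kappa}$ into (\ref{eq:2.14}) and relabelling variables. By Theorem \ref{eq:t2.1}, $r\mapsto\mathcal{M}(r;0,u)$ is nondecreasing, and by Lemma \ref{eq:l4.1} it is bounded, so $M^{\ast}:=\lim_{R\to+\infty}\mathcal{M}(R;0,u)$ exists and is finite. Therefore $\mathcal{M}(r;0,u^{\kappa})\to M^{\ast}$ for every $r>0$. Using the strong $W^{1,2}_{loc}$ and $L^{p+1}_{loc}$ convergence just established, together with trace theorems to handle the boundary terms on $\partial B_r$ (for almost every $r$), I can pass to the limit term-by-term in (\ref{eq:2.14}) and conclude $\mathcal{M}(r;0,u^{\infty})\equiv M^{\ast}$. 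The rigidity part of Theorem \ref{eq:t2.1} then forces $u^{\infty}$ to be homogeneous of degree $-\lambda$.

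The main obstacle is upgrading weak convergence to strong convergence in $L^{p+1}_{loc}(\mathbb{R}^N,|x|^{\alpha}dx)$ so that the nonlinearity $|x|^{\alpha}|u^{\kappa_n}|^{p-1}u^{\kappa_n}$ passes to the limit, and simultaneously controlling the boundary integrals in $\mathcal{M}$ on $\partial B_r$. The singular weight at the origin prevents direct use of standard Sobolev embeddings globally, so the truncation argument and careful use of the uniform bounds near and away from $\{0\}$ are essential; this is the technical heart of the proof.
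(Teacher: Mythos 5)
Your overall architecture matches the paper's: scaling invariance, uniform local bounds from the integral estimate (\ref{eq:2.4}), weak $W^{2,2}_{loc}$ compactness and strong $W^{1,2}_{loc}$ convergence, passage to the limit in the weak formulation and the stability inequality, and homogeneity via the monotonicity formula with the scaling identity $\mathcal{M}(r;0,u^{\kappa})=\mathcal{M}(\kappa r;0,u)$. Two steps, however, deviate in a way worth flagging.

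For the claimed strong $L^{p+1}_{loc}$ convergence, invoking Sobolev embedding from $W^{2,2}_{loc}$ cannot work in the range considered here: the critical exponent for $W^{2,2}\hookrightarrow L^{q}$ is $q=\frac{2N}{N-4}$, and the dimension restriction $N>4+\beta+\frac{8+2\alpha+2\beta}{p-1}$ means $p$ is supercritical, i.e.\ $p+1>\frac{2N}{N-4}$ (at least when $\alpha,\beta\ge 0$), so the embedding fails. Your fallback of ``absolute continuity of the integral'' near the origin is also not available: a uniform bound on $\int_{B_r}|x|^{\alpha}|u^{\kappa}|^{p+1}$ does not preclude concentration at $0$ and hence does not give uniform integrability. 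What the paper actually proves is weaker but sufficient: it interpolates $\|u^{\kappa}-u^{\infty}\|_{L^q(B_r)}\le\|u^{\kappa}-u^{\infty}\|_{L^1(B_r)}^{t}\,\|u^{\kappa}-u^{\infty}\|_{L^{p+1}(B_r)}^{1-t}\to 0$ for every $q\in(1,p+1)$, combining the strong $L^1_{loc}$ convergence from Rellich with the uniform $L^{p+1}_{loc}$ bound (see (\ref{eq:4.3})). Since the nonlinearity $|u^{\kappa}|^{p-1}u^{\kappa}$ only needs convergence at exponent $p<p+1$, and the stability inequality can be passed to the limit by Fatou with the a.e.\ convergence already available, this $L^q$ ($q<p+1$) convergence suffices for the rest of the lemma. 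You should replace the truncation/Sobolev step with this interpolation.

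For homogeneity, your plan to pass to the limit directly in $\mathcal{M}(r;0,u^{\kappa})\to\mathcal{M}(r;0,u^{\infty})$ forces you to control the boundary integrals over $\partial B_r$ in (\ref{eq:2.14}), which, as you note, is the technical sticking point. The paper avoids this entirely: it only uses that
\begin{align*}
0&=\lim_{\kappa\to\infty}\bigl[\mathcal{M}(r_2;0,u^{\kappa})-\mathcal{M}(r_1;0,u^{\kappa})\bigr]\\
&\ge C(N,p,\alpha,\beta)\lim_{\kappa\to\infty}\int_{B_{r_2}\setminus B_{r_1}}|x|^{2+2\lambda-N}\Bigl(\lambda\mu^{-1}u^{\kappa}+\tfrac{\partial u^{\kappa}}{\partial\mu}\Bigr)^2dx,
\end{align*}
which is a bulk integral over an annulus away from the origin; the strong $W^{1,2}_{loc}$ convergence then identifies the limit with the corresponding integral for $u^{\infty}$, forcing $\lambda\mu^{-1}u^{\infty}+\partial_{\mu}u^{\infty}\equiv 0$. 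This route requires no trace theorems at all and you would do better to adopt it.

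In short, your proposal is essentially correct in structure, but the upgrade to $L^{p+1}_{loc}$ strong convergence via Sobolev embedding is a genuine gap in this supercritical regime, and the trace-based limit in $\mathcal{M}$ introduces unnecessary difficulty that the paper's derivative-formula argument bypasses.
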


\begin{proof}
Since $u$ is a stable solution of (\ref{eq:1.1}), we can find
\begin{align}\label{eq:4.2}
& p \int_{\mathbb{R}^N} |x|^{\alpha}|u^{\kappa}|^{p-1} \zeta^2(x)dx=p\int_{\mathbb{R}^N}|\kappa x|^{\alpha}\kappa^{4+\beta}|u(\kappa x)|^{p-1}\zeta^2(x)dx \nonumber \\[0.1cm]
& =p\kappa^{4+\beta-N}\int_{\mathbb{R}^N}|y|^{\alpha}|u(y)|^{p-1}\psi^2(y)dy\quad\; \mbox{taking}\;\ \psi(y):=\zeta(x),\; x=\dfrac{y}{\kappa} \nonumber \\[0.1cm]
& \le \kappa^{4+\beta-N}\int_{\mathbb{R}^N}\dfrac{|\Delta \psi(y)|^2}{|y|^{\beta}}dy \nonumber \\[0.1cm]
& =\int_{\mathbb{R}^N} \dfrac{|\Delta \zeta|^2}{|x|^{\beta}}dx.
\end{align}Thus, $u^{\kappa}$ is a stable solution of (\ref{eq:1.1}). Furthermore, from (\ref{eq:2.4}),
it implies that
\begin{align*}
& \int_{B_r(x)} \left [|y|^{-\beta} \left (\Delta u^{\kappa}\right )^2+|y|^{\alpha} |u^{\kappa}|^{p+1} \right ] dy \nonumber \\[0.1cm]
& = \kappa^{4+\beta+\frac{8+2\alpha+2\beta}{p-1}-N} \int_{B_{\kappa r}(x)} \left [ |z|^{-\beta}|\Delta(z)|^2+|z|^{\alpha} |u(z)|^{p+1} \right ] dz \nonumber \\[0.1cm]
& \le Cr^{N-4-\beta-\frac{8+2\alpha+2\beta}{p-1}},
\end{align*}and applying H\"{o}lder's inequality yields
\begin{align*}
\int_{B_r(x)} \left | u^{\kappa} \right |^2 dz & \le \left (\int_{B_r(x)} |z|^{\alpha} \left |u^{\kappa}\right |^{p+1} dz\right )^{\frac{2}{p+1}}
\left (\int_{B_r(x)} |z|^{-\frac{2\alpha }{p-1}} dz \right )^{\frac{p-1}{p+1}}\\[0.1cm]
& \le  Cr^{N-2\lambda}.
\end{align*}Clearly, we also obtain
\begin{align*}
\int_{B_r(x)} |\Delta u^{\kappa}|^2 dz & =\int_{B_{\kappa r}(x)} \kappa^{2\lambda+4-N}|z|^{\beta}\dfrac{|\Delta u(z)|^2}{|z|^{\beta}}dz\\
& \le Cr^{N-4-2\lambda}.
\end{align*}\vskip .06in

By the application of the elliptic regularity theory,
it implies that $u^{\kappa}$ are  uniformly bounded in $W^{2,2}_{loc}(\mathbb{R}^N)$.
Again $u\in C^4(\mathbb{R}^N)$ implies $u^{\kappa} \in L^{p+1}_{loc}(\mathbb{R}^N)$. Then
we can suppose that $u^{\kappa} \rightharpoonup u^{\infty}$ weakly
in $W^{2,2}_{loc}(\mathbb{R}^N) \cap L^{p+1}_{loc}(\mathbb{R}^N)$ (if necessary, we can extract a subsequence).
Now, using the standard embeddings, we get
$u^{\kappa} \to u^{\infty}$ strongly in $W^{1,2}_{loc}(\mathbb{R}^N)$. Therefore, applying the interpolation inequality
between $L^q$ spaces with $q\in (1,p+1)$, we get that, for any ball $B_r$
\begin{equation}\label{eq:4.3}
\|u^{\kappa}-u^{\infty}\|_{L^q(B_r)}\le \|u^{\kappa}-u^{\infty}\|_{L^1(B_r)}^t \|u^{\kappa}-u^{\infty}\|_{L^{p+1}(B_r)}^{1-t} \to 0,\;\;\mbox{as}\;\; \kappa \to +\infty,
\end{equation}where $t \in (0,1)$ satisfying $\dfrac{1}{q}=t +\dfrac{1-t}{p+1}$. Next, combining with the definition of $u^{\kappa}$ and (\ref{eq:4.2}), we conclude that, for any $\zeta \in C_0^2 (\mathbb{R}^N)$
\begin{align*}
\int_{\mathbb{R}^N} \dfrac{\Delta u^{\infty}}{|x|^{\beta}}\Delta \zeta -|x|^{\alpha} |u^{\infty}|^{p-1}u^{\infty} \zeta = \lim\limits_{\kappa \to \infty}
\int_{\mathbb{R}^N} \dfrac{\Delta u^{\kappa}}{|x|^{\beta}}\Delta \zeta -|x|^{\alpha} |u^{\kappa}|^{p-1}u^{\kappa} \zeta,  \\[0.15cm]
\int_{\mathbb{R}^N} \dfrac{\left (\Delta \zeta \right )^2}{|x|^{\beta}}-p |x|^{\alpha} |u^{\infty}|^{p-1} \zeta^2 = \lim\limits_{\kappa \to \infty} \int_{\mathbb{R}^N} \dfrac{(\Delta \zeta)^2}{|x|^{\beta}} -p |x|^{\alpha} |u^{\kappa}|^{p-1} \zeta^2  \ge 0,
\end{align*}that is, $u^{\infty} \in W_{loc}^{2,2}
(\mathbb{R}^N) \cap L_{loc}^{p+1}(\mathbb{R}^N)$ is a stable solution of (\ref{eq:1.1}) in $\mathbb{R}^N$. \vskip .1in

From the boundedness and monotonicity of $\mathcal{M}(r;0,u)$, it implies that for any $0<r_1<r_2<+\infty$,
\begin{equation*}
\lim\limits_{\kappa \to \infty} \Big [\mathcal{M}(\kappa r_2;0,u)-\mathcal{M}(\kappa r_1; 0,u)\Big ]=0.
\end{equation*}Again using the scaling invariance and Theorem \ref{eq:t2.1}, we get
\begin{align*}
0 & =\lim\limits_{\kappa \to \infty} \left [\mathcal{M}\left (r_2;0,u^{\kappa}\right )-\mathcal{M}\left (r_1;0,u^{\kappa}
\right )\right ] \\[0.1cm]
& = \lim\limits_{\kappa \to \infty} \int_{r_1}^{r_2} \dfrac{d }{d \mu}\mathcal{M}\left (\mu; 0,u^{\kappa} \right ) d \mu \\[0.1cm]
& \ge C(N,p,\alpha,\beta) \int_{B_{r_2} \backslash B_{r_1}} |x|^{2+2\lambda-N}\left (\lambda \mu^{-1}
u^{\infty}+\dfrac{\partial u^{\infty}}{\partial \mu}\right )^2dx.
\end{align*}Adopting the same calculation as Theorem \ref{eq:t2.1}, we obtain
that $u^{\infty}$ is homogeneous.
\end{proof}

\begin{lemma}\label{eq:l4.3}
$\lim\limits_{r \to \infty} \mathcal{M} (r;0,u)=0$.
\end{lemma}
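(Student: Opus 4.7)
The plan is to combine the scaling invariance of $\mathcal{M}$ with the triviality of the blowdown limit $u^\infty$ that follows from Lemma \ref{eq:l4.2} and Theorem \ref{eq:t1.1}. First I would verify the scaling identity
\begin{equation*}
\mathcal{M}(r;0,u^\kappa)=\mathcal{M}(\kappa r;0,u),\qquad \kappa,r>0,
\end{equation*}
by direct substitution $y=\kappa x$ in each of the seven terms defining (\ref{eq:2.14}); this is exactly the calibration that fixes the exponent $\delta=2\lambda+4+\beta-N$ and the various powers of $r$ weighting the boundary integrals. Combined with the monotonicity and finite-limit statements (Theorem \ref{eq:t2.1} and Lemma \ref{eq:l4.1}), this shows that the limit $L:=\lim_{s\to\infty}\mathcal{M}(s;0,u)$ exists, is finite, and satisfies
\begin{equation*}
\lim_{\kappa\to\infty}\mathcal{M}(r;0,u^\kappa)=L\qquad\text{for every }r>0.
\end{equation*}

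Next I would pass to the limit on the left-hand side to identify $\mathcal{M}(r;0,u^\infty)$ with $L$ for a.e.\ $r$. The bulk integrals $r^\delta\int_{B_r}|z|^{-\beta}|\Delta u^\kappa|^2$ and $r^\delta\int_{B_r}|z|^\alpha|u^\kappa|^{p+1}$ are handled by upgrading the weak $W^{2,2}_{loc}\cap L^{p+1}_{loc}$ convergence from Lemma \ref{eq:l4.2} to strong convergence: one tests the system (\ref{eq:1.1}) for $u^\kappa$ against suitable cut-offs, applies the uniform integral bounds from Lemma \ref{eq:al2.3} together with an elliptic bootstrap, and interpolates against the already-established $L^q_{loc}$ convergence in (\ref{eq:4.3}). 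The surface integrals against $u^2$, $|\nabla u|^2$ and $(\partial_r u)^2$ on $\partial B_r$ are then controlled for a.e.\ $r$ via Fubini and trace theorems.

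The $r$-derivative terms in $\mathcal{M}$ are the delicate piece, so I would avoid taking derivatives under the limit and instead work with
\begin{equation*}
\mathcal{M}(r_2;0,u^\kappa)-\mathcal{M}(r_1;0,u^\kappa)=\int_{r_1}^{r_2}\frac{d}{d\mu}\mathcal{M}(\mu;0,u^\kappa)\,d\mu,
\end{equation*}
passing to the limit $\kappa\to\infty$ inside the integral (all integrands now being zeroth-order boundary expressions that converge a.e.\ by the previous paragraph), and only afterwards letting $r_2-r_1\to 0$. This yields $\mathcal{M}(r;0,u^\infty)=L$ for a.e.\ $r>0$.

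Finally, Lemma \ref{eq:l4.2} gives that $u^\infty$ is a homogeneous stable solution of (\ref{eq:1.1}) on $\mathbb{R}^N\setminus\{0\}$, and the required local integrability $|x|^\alpha|u^\infty|^{p+1},\,|x|^{-\beta}|\Delta u^\infty|^2\in L^1_{loc}(\mathbb{R}^N\setminus\{0\})$ is inherited from the uniform bounds on $u^\kappa$ by Fatou. Since $4+\beta+\tfrac{8+2\alpha+2\beta}{p-1}<N<N_{\alpha,\beta}(p)$, Theorem \ref{eq:t1.1} forces $u^\infty\equiv 0$, hence $\mathcal{M}(r;0,u^\infty)\equiv 0$, so $L=0$. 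The principal obstacle is the strong-convergence upgrade and the trace/derivative passage in the second and third paragraphs; everything else is bookkeeping built on the scaling identity.
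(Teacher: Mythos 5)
Your overall strategy is the same as the paper's: use scaling invariance and monotonicity to see that $\lim_{\kappa\to\infty}\mathcal{M}(r;0,u^\kappa)=L$ for every $r$, and then use $u^\infty\equiv 0$ (from Lemma \ref{eq:l4.2} and Theorem \ref{eq:t1.1}) to force $L=0$. Your second paragraph also correctly identifies the need to upgrade to strong local convergence; the paper does exactly this via (\ref{eq:2.3}), the Cauchy--Schwarz bound on $\int |u^\kappa\Delta u^\kappa|$, and interior $L^p$-estimates.

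However, your third paragraph contains a genuine gap. The identity $\mathcal{M}(r_2;0,u^\kappa)-\mathcal{M}(r_1;0,u^\kappa)=\int_{r_1}^{r_2}\frac{d}{d\mu}\mathcal{M}(\mu;0,u^\kappa)\,d\mu$, after passing to the limit on both sides, only yields $0=\mathcal{M}(r_2;0,u^\infty)-\mathcal{M}(r_1;0,u^\infty)$; that is, the constancy of $\mathcal{M}(\cdot;0,u^\infty)$, which is already trivial since $u^\infty\equiv 0$. It does not establish the identification $\lim_\kappa\mathcal{M}(r;0,u^\kappa)=\mathcal{M}(r;0,u^\infty)$ at any $r$, which is what you actually need to conclude $L=0$, and which is exactly the piece you labelled ``delicate.'' Moreover your second paragraph only claims convergence of the surface integrals of $u^2$, $|\nabla u|^2$, $(\partial_r u)^2$, but the $\frac{d}{dr}$-terms in (\ref{eq:2.14}), once expanded, contain $\partial_r^2 u$ and $\nabla\partial_r u$ on $\partial B_r$, so second-order radial derivatives must also converge on the chosen sphere. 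The paper closes this gap with a Fubini averaging step: after extracting a subsequence with $\sum_i\int_{B_2\setminus B_1}\sum_{j\le 2}|\nabla^j u^{\kappa_i}|^2<\infty$, one picks a radius $\gamma\in(1,2)$ for which $\|u^{\kappa_i}\|_{W^{2,2}(\partial B_\gamma)}\to 0$; since $\mathcal{M}(\gamma;0,\cdot)$ depends on at most second-order derivatives on $\partial B_\gamma$ plus a bulk term that already vanishes, this gives $\mathcal{M}(\gamma;0,u^{\kappa_i})=\mathcal{M}(\kappa_i\gamma;0,u)\to 0$, and monotonicity then forces $\lim_{r\to\infty}\mathcal{M}(r;0,u)=0$. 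That pointwise selection of a good radius is the essential mechanism your proposal is missing.
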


\noindent {\it Proof.} Since $u^{\infty}$ is a homogeneous, stable solution of (\ref{eq:1.1}),
it implies from Theorem \ref{eq:t1.1} that
\begin{equation*}
u^{\infty} \equiv 0.
\end{equation*}Combining (\ref{eq:4.3}) with the above equality, we find that
\begin{equation*}
\lim\limits_{\kappa \to +\infty} u^{\kappa} =0, \;\ \mbox{stongly}\; \mbox{in}\; \ L^2(B_6),
\end{equation*}
i.e.,
\begin{equation*}
\lim\limits_{\kappa \to +\infty} \int_{B_6} |u^{\kappa}|^2 =0.
\end{equation*}From the uniform boundedness of $\Delta u^{\kappa}$ in $L^2(B_6)$, we get
\begin{equation*}
\lim\limits_{\kappa \to \infty} \int_{B_6} \left |u^{\kappa}\Delta u^{\kappa}\right | \le \lim\limits_{\kappa \to \infty}
\left (\int_{B_6} \left |u^{\kappa}\right |^2 \right )^{\frac{1}{2}} \left ( \int_{B_6}\left |\Delta u^{\kappa}\right |^2 \right )^{\frac{1}{2}}=0.
\end{equation*}Therefore, it implies from (\ref{eq:2.3}) that
\begin{equation*}
\lim\limits_{\kappa \to +\infty} \int_{B_1} \left | \Delta u^{\kappa}\right |^2 +|x|^{\alpha} |u^{\kappa}|^{p+1} \le
C \lim\limits_{\kappa \to +\infty} \int_{B_6} \left |u^{\kappa}\right |^2+\left |u^{\kappa} \Delta u^{\kappa}\right | =0.
\end{equation*}A direct application of the interior $L^p$-estimates gets
\begin{equation*}
\lim\limits_{\kappa \to +\infty} \int_{B_2} \sum\limits_{j \le 2}
|\nabla^j u^{\kappa}|=0,
\end{equation*}implies
\begin{equation*}
\int_1^2 \left ( \sum\limits_{i=1}^{\infty} \int_{\partial B_r} \sum\limits_{j\le 2}
|\nabla^j u^{\kappa_i}|^2\right ) dr \le \sum\limits_{i=1}^{\infty} \int_{B_{2r}\backslash B_r}
\sum\limits_{j \le 2} |\nabla^j u^{\kappa_i}|^2 \le 1.
\end{equation*}Then, let us note that there exists a $\gamma \in (1,2)$ such that
\begin{equation*}
\lim\limits_{\kappa \to \infty} \|u^{\kappa}\|_{W^{2,2}(\partial B_{\gamma})}=0.
\end{equation*}

Now, combing the above results with the scaling invariance of $\mathcal{M}(r;0,u)$, we obtain
\begin{equation*}
\lim\limits_{i \to \infty}\mathcal{M}(\kappa_i \gamma; 0,u)=\lim\limits_{i \to \infty}\mathcal{M}(\gamma;0,u^{\kappa_i})=0.
\end{equation*}Again since $\kappa_i \gamma \to +\infty$ and $\mathcal{M}(r;0,u)$ is non-decreasing in $r$, we get
\begin{equation*}
\lim\limits_{r \to \infty} \mathcal{M}(r;0,u)=0.\eqno \square
\end{equation*}

Since $u \in C^4(\mathbb{R}^N)$, we get $\lim\limits_{r \to 0} \mathcal{M}(r;0,u)=0$. Again using the monotonicity of $\mathcal{M}(r;0,u)$
and Lemma \ref{eq:l4.3}, we get
\begin{equation*}
\mathcal{M}(r;0,u)=0,\quad \mbox{for}\;\; \mbox{all}\;\; r>0.
\end{equation*}Therefore, combining with Theorem \ref{eq:t2.1}, we conclude that $u$ is homogeneous and by Theorem \ref{eq:t1.1}
\begin{equation*}
u\equiv 0. \eqno \square
\end{equation*}
\vskip .3in

\noindent {\bf Acknowledge:} The author wishes to express his warmest thanks to Chern Institute of Mathematics, Nankai University (where part of this work was done) for their warm hospitality.

\end{document}